\newtheorem{corollary}{Corollary}
\theoremstyle{thmstyleone}%
\newtheorem{theorem}{Theorem}
\newtheorem{proposition}[theorem]{Proposition}%
\theoremstyle{thmstyletwo}%
\newtheorem{example}{Example}%
\newtheorem{remark}{Remark}%
\theoremstyle{thmstylethree}%
\newtheorem{definition}{Definition}%
\begin{document}

\title[MARL-CC]{MARL-CC: A Mathematical Framework for Multi-Agent Reinforcement Learning in Connected Autonomous Vehicles: Addressing Nonlinearity, Partial Observability, and Credit Assignment for Optimal Control}


\author*[1,2]{\fnm{Mazyar} \sur{Taghavi}}\email{mazyar\_taghavi@mathdep.iust.ac.ir}

\author*[1]{\fnm{Javad} \sur{Vahidi}}\email{jvahidi@iust.ac.ir}

\affil*[1]{\orgdiv{School of Mathematics and Computer Science}, \orgname{Iran University of Science and Technology}, \city{Tehran}, \country{Iran}}

\affil[2]{ \orgname{Intelligent Knowledge City}, \city{Isfahan}, \country{Iran}}


\abstract{Multi-Agent Reinforcement Learning (MARL) has emerged as a powerful paradigm for cooperative decision-making in connected autonomous vehicles (CAVs); however, existing approaches often fail to guarantee stability, optimality, and interpretability in systems characterized by nonlinear dynamics, partial observability, and complex inter-agent coupling. This study addresses these foundational challenges by introducing MARL-CC, a unified Mathematical Framework for Multi-Agent Reinforcement Learning with Control Coordination.

The proposed framework integrates differential geometric control, Bayesian inference, and Shapley-value-based credit assignment within a coherent optimization architecture, ensuring bounded policy updates, decentralized belief estimation, and equitable reward distribution. Theoretical analyses establish convergence and stability guarantees under stochastic disturbances and communication delays. Empirical evaluations across simulation and real-world testbeds demonstrate up to a 40\% improvement in convergence rate and enhanced cooperative efficiency over leading baselines, including PPO, DDPG, and QMIX.

These results signify a decisive advance in control-oriented reinforcement learning, bridging the gap between mathematical rigor and practical autonomy. The MARL-CC framework provides a scalable foundation for intelligent transportation, UAV coordination, and distributed robotics, paving the way toward interpretable, safe, and adaptive multi-agent systems. All codes and experimental configurations are publicly available on GitHub to support reproducibility and future research.

}

\keywords{Multi-Agent Reinforcement Learning, Optimal Control, Connected Autonomous Vehicles, Nonlinear Control}



\maketitle

\section{Introduction}
\label{sec:introduction}

\subsection{Background and Motivation}
Connected Autonomous Vehicles (CAVs) embody a transformative paradigm in intelligent transportation, where vehicles function as distributed intelligent agents capable of perceiving, learning, and cooperating through vehicle-to-vehicle (V2V) and vehicle-to-infrastructure (V2I) communication. This connectivity enhances situational awareness, coordination, and traffic efficiency. Yet, conventional optimal control strategies—originally developed for centralized, linear systems—prove inadequate for CAVs due to nonlinear inter-vehicle dynamics, stochastic traffic conditions, and decentralized, partially observable information \cite{isidori1995nonlinear}.

Multi-Agent Reinforcement Learning (MARL) provides a promising alternative, enabling agents to derive cooperative policies that maximize long-term rewards within shared, uncertain environments \cite{taghavi2025quantum, sutton2018reinforcement, taghavi2025q}. By coupling adaptive learning with decentralized control, MARL supports autonomous coordination without requiring explicit system modeling. However, its practical deployment in CAVs remains hindered by nonlinearities, partial observability, and the persistent credit assignment dilemma that complicates attributing collective performance to individual actions \cite{zhang2021multi}.

To address these challenges, we propose \textbf{MARL-CC} — a unified mathematical framework integrating differential geometric control, Bayesian inference within a POMDP formulation, and Shapley-value-based reward decomposition \cite{kaelbling1998planning, shapley1953value}. MARL-CC offers a principled bridge between optimal control theory and reinforcement learning, ensuring convergence stability, robustness to uncertainty, and cooperative efficiency across dynamic vehicular networks. Through this synthesis, the framework advances scalable, decentralized control for connected autonomy under real-world constraints.

\subsection{Key Challenges}
\label{sec:key_challenges}

The application of Multi-Agent Reinforcement Learning (MARL) to Connected Autonomous Vehicle (CAV) networks encounters several intrinsic difficulties that constrain the attainment of optimal cooperative control. Foremost, vehicle dynamics are highly nonlinear and strongly coupled through inter-vehicle interactions, where acceleration, steering, and trajectory evolution depend on both local and neighboring states, producing a high-dimensional nonlinear control landscape \cite{isidori1995nonlinear}. Partial observability further complicates policy learning, as each vehicle perceives only limited and noisy local information due to sensing, bandwidth, and latency constraints. Finally, the credit assignment problem remains central: global rewards obscure the contribution of individual actions, resulting in misleading gradients and reduced learning stability \cite{shapley1953value}.

\subsection{Research Gap}
\label{sec:research_gap}

Despite notable progress in MARL coordination, most existing models lack a unified mathematical foundation capable of ensuring stability and optimality under nonlinear, partially observable conditions. Many rely on empirical or centralized heuristics that fail to scale across large, dynamic networks and inadequately capture inter-agent coupling or uncertainty. Conversely, classical control methods, while theoretically rigorous, are limited to linear or fully observable systems and thus unsuitable for decentralized adaptive learning. This divergence underscores the need for a framework that harmonizes the theoretical guarantees of control theory with the adaptability of learning-based paradigms for complex CAV ecosystems.

\subsection{Contributions of the Paper}
\label{sec:contributions}

To bridge this gap, we propose \textbf{MARL-CC}, a unified framework synthesizing nonlinear control theory, probabilistic inference, and cooperative game theory to achieve decentralized optimal control in connected autonomy. The approach employs a differential geometric state-space formulation for nonlinear dynamics, enabling local linearization and stability assurances under inter-agent coupling. Partial observability is managed through a POMDP-based Bayesian inference mechanism that supports decentralized belief updates, while the credit assignment problem is mitigated via a Shapley-value-driven reward decomposition ensuring equitable contribution assessment and stable cooperation. Theoretical analyses establish convergence under bounded uncertainty and delay, and extensive simulations—spanning intersection coordination and platoon control—demonstrate superior convergence, robustness, and collective efficiency compared with MARL and classical baselines \cite{kaelbling1998planning, zhang2021multi}.

\section{Related Works}
\label{sec:related_works}

\subsection{Reinforcement Learning for Vehicle Control}
\label{sec:rl_vehicle_control}

Recent advancements in reinforcement learning (RL) have significantly impacted vehicle control strategies, particularly in trajectory tracking and motion control. Algorithms such as Deep Q-Learning (DQN), Deep Deterministic Policy Gradient (DDPG), and Proximal Policy Optimization (PPO) have been extensively applied to autonomous vehicle (AV) systems. DQN has been utilized for discrete control tasks, while DDPG and PPO are favored for continuous control scenarios due to their stability and efficiency in high-dimensional action spaces. Notably, a comparative study highlighted that DDPG outperforms PPO in terms of higher cumulative rewards and faster convergence in lane-keeping and multi-agent collision avoidance tasks \cite{turn0search12}. However, these single-agent approaches often struggle with scalability and coordination in multi-agent environments, where inter-agent dependencies and partial observability complicate the learning process.

\subsection{Multi-Agent Reinforcement Learning (MARL) in Connected Systems}
\label{sec:marl_connected_systems}

The application of MARL to connected systems, particularly in intelligent transportation systems (ITS), has garnered significant attention. Centralized Training with Decentralized Execution (CTDE) frameworks, such as Multi-Agent Deep Deterministic Policy Gradient (MADDPG), QMIX, and Value Decomposition Networks (VDN), have been developed to address the challenges of partial observability and decentralized control \cite{lowe2017multi, rashid2020weighted, yu2021mappo}. These approaches facilitate coordination among agents by allowing centralized training while enabling decentralized execution, thereby improving scalability and efficiency in multi-agent settings. Recent studies have further explored communication-efficient MARL algorithms to mitigate the overhead associated with inter-agent communication, which is crucial in large-scale AV networks \cite{turn0search1}. Despite these advancements, issues related to stability, convergence, and credit assignment remain prevalent, necessitating the development of more robust and interpretable MARL frameworks.

\subsection{Mathematical and Control-Theoretic Approaches}
\label{sec:control_theoretic_approaches}

Traditional control theories, including Linear Quadratic Regulator (LQR), Model Predictive Control (MPC), and Lyapunov-based methods, have been foundational in AV control. These approaches offer strong stability guarantees and optimal performance under well-defined conditions. For instance, LQR has been applied to lateral motion control in AVs, and MPC has been utilized for trajectory tracking under constraints \cite{turn0search8}. However, these methods often assume linear dynamics and centralized control, limiting their applicability in the complex, nonlinear, and decentralized nature of connected AV systems. Recent developments in nonlinear control, such as feedback linearization and backstepping, have been proposed to address these limitations. Additionally, game-theoretic and information-theoretic approaches have been explored to model agent interactions and communication in multi-agent systems, providing a theoretical foundation for cooperative behavior in AV networks \cite{turn0search2}.

\subsection{Identified Research Gap}
\label{sec:research_gap}

Despite the extensive body of work in RL, MARL, and control theory, a unified mathematical framework that integrates these domains to address the unique challenges of connected AVs remains lacking. Existing MARL models often lack the mathematical formalism required to guarantee stability and optimality in nonlinear, partially observable environments. Conversely, traditional control theories do not scale well to distributed learning contexts inherent in multi-agent systems. This gap underscores the need for a comprehensive approach that combines the adaptive learning capabilities of MARL with the stability and optimality guarantees of control theory, tailored to the specific requirements of connected AV networks.

\section{Mathematical Foundations}
\label{sec:math_foundations}

\subsection{System Model of Connected Autonomous Vehicles}
\label{sec:system_model}

Consider a fleet of \( N \) connected autonomous vehicles (CAVs), indexed by \( i \in \mathcal{N} = \{1,2,\dots,N\} \), operating in a dynamic traffic environment. Let \( x_i(t) \in \mathbb{R}^{n} \) denote the state of vehicle \( i \) at time \( t \), including its position, velocity, orientation, and additional kinematic or dynamic variables. The control input is \( u_i(t) \in \mathbb{R}^{m} \), corresponding to steering, acceleration, or braking commands. The state evolution is governed by coupled nonlinear dynamics:  

\begin{equation}
\dot{x}_i(t) = f_i\Big(x_i(t), u_i(t), \{x_j(t)\}_{j \in \mathcal{N}_i}\Big), \quad x_i(0) = x_i^0,
\end{equation}

where \( \mathcal{N}_i \subset \mathcal{N} \setminus \{i\} \) denotes the set of neighbors with which vehicle \( i \) can communicate or interact, and \( f_i: \mathbb{R}^n \times \mathbb{R}^m \times \mathbb{R}^{n|\mathcal{N}_i|} \to \mathbb{R}^n \) is a smooth, nonlinear function capturing both self-dynamics and inter-vehicle couplings.  

\begin{definition}[Neighbor Coupling]
A vehicle \( j \in \mathcal{N}_i \) is considered a neighbor of \( i \) if it satisfies either a spatial proximity constraint \( \|x_i - x_j\| \leq d_{\max} \) or a communication availability constraint within bandwidth and latency limits.
\end{definition}

\begin{example}
In a platoon control scenario, \( \mathcal{N}_i \) typically includes the immediately preceding and following vehicles, enabling local coordination while minimizing communication overhead.
\end{example}

\subsection{Nonlinear Optimal Control Objective}
\label{sec:nonlinear_optimal_control}

The control goal is to minimize a cumulative cost function reflecting safety, energy efficiency, and traffic flow objectives:

\begin{equation}
J_i(u_i) = \int_0^T \ell_i(x_i(t), u_i(t)) + \sum_{j \in \mathcal{N}_i} \gamma_{ij} \ell_{ij}(x_i(t), x_j(t)) \, dt,
\end{equation}

where \( \ell_i: \mathbb{R}^n \times \mathbb{R}^m \to \mathbb{R} \) represents the individual running cost, \( \ell_{ij} \) captures interaction costs, and \( \gamma_{ij} \in [0,1] \) weights inter-agent influences. The optimal control \( u_i^*(t) \) satisfies the Hamilton-Jacobi-Bellman (HJB) equation:

\begin{equation}
\frac{\partial V_i}{\partial t} + \min_{u_i} \left[ \ell_i(x_i, u_i) + \sum_{j \in \mathcal{N}_i} \gamma_{ij} \ell_{ij}(x_i, x_j) + \nabla V_i^\top f_i(x_i, u_i, \{x_j\}_{j \in \mathcal{N}_i}) \right] = 0,
\end{equation}

where \( V_i(x_i,t) \) is the value function for agent \( i \).  

\begin{proposition}[Existence of Optimal Control]
Under smoothness and boundedness assumptions on \( f_i \) and \( \ell_i \), there exists a measurable control \( u_i^*(t) \) that minimizes \( J_i \) for all \( i \in \mathcal{N} \) \cite{isidori1995nonlinear}.
\end{proposition}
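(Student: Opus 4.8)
The plan is to establish existence via the \emph{direct method} of optimal control, in the form of the Filippov--Cesari existence framework. First I would decouple the single-agent problem: since $f_i$ depends on the neighbor states $\{x_j(t)\}_{j\in\mathcal{N}_i}$, I treat these as fixed measurable exogenous signals, so that the dynamics reduce to $\dot x_i = \tilde f_i(x_i,u_i,t)$ with Carath\'eodory regularity inherited from the smoothness of $f_i$. I then restrict the controls to the admissible class $\mathcal{U}=\{u_i \in L^\infty(0,T;U)\}$, where $U\subset\mathbb{R}^m$ is the compact set of feasible steering, acceleration, and braking commands encoded by the boundedness assumption.

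Second, I would show the problem is well posed. Boundedness below of $\ell_i$ and $\ell_{ij}$ gives $J_i^\ast = \inf_{u_i\in\mathcal{U}}J_i(u_i) > -\infty$, so a minimizing sequence $\{u_i^k\}\subset\mathcal{U}$ with $J_i(u_i^k)\to J_i^\ast$ exists. Because $U$ is bounded and $\tilde f_i$ is Lipschitz in $x_i$, Gr\"onwall's inequality yields a uniform a priori bound on the trajectories $x_i^k$, and their derivatives $\dot x_i^k = \tilde f_i(x_i^k,u_i^k,t)$ are uniformly bounded; Arzel\`a--Ascoli then extracts a subsequence with $x_i^k\to x_i^\ast$ uniformly, while the controls admit a weak-$\ast$ limit $u_i^k \rightharpoonup u_i^\ast$ in $L^\infty$.

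Third comes the closure and lower-semicontinuity step, which I expect to be the principal obstacle. Weak-$\ast$ convergence of $u_i^k$ does not automatically survive passage through the nonlinear map $\tilde f_i$, so one cannot directly conclude that $x_i^\ast$ is generated by $u_i^\ast$, nor that $J_i(u_i^\ast)\le\liminf J_i(u_i^k)$. The standard resolution is Cesari's convexity hypothesis: one requires the augmented velocity set $Q(x,t)=\{(f_i(x,u,t),\,\eta):\eta\ge\ell_i(x,u)+\sum_{j}\gamma_{ij}\ell_{ij}(x,x_j),\,u\in U\}$ to be convex and closed. Under this condition the Filippov--Cesari closure theorem guarantees that the limit pair $(x_i^\ast,u_i^\ast)$ is admissible and that $J_i$ is sequentially weakly lower semicontinuous, whence $J_i(u_i^\ast)=J_i^\ast$ and $u_i^\ast$ is the sought measurable optimal control.

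Finally, I would note two fallbacks should convexity of $Q$ fail. One option is to pass to \emph{relaxed} (Young-measure) controls, for which a minimizer always exists, and then use convexity of the velocity set to show the relaxed optimum is realized by an ordinary measurable control. Alternatively, since the HJB equation of Section~\ref{sec:nonlinear_optimal_control} admits a sufficiently regular value function $V_i$ under the stated smoothness, one may synthesize $u_i^\ast(t)$ pointwise as the argmin of the Hamiltonian, which is measurable by a standard measurable-selection theorem. The coupled multi-agent conclusion for all $i\in\mathcal{N}$ then follows by applying the single-agent argument on the product state space, treating the joint dynamics $\{f_i\}_{i\in\mathcal{N}}$ simultaneously.
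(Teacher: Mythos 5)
The paper offers no proof of this proposition: it is stated bare, with only a citation to Isidori's \emph{Nonlinear Control Systems} (a text on geometric control, not on existence theory), so there is nothing to compare your argument against line by line. Your proposal supplies what the paper omits, and it follows the standard and correct route: freeze the neighbor trajectories as exogenous signals, take a minimizing sequence, obtain compactness of trajectories via Gr\"onwall and Arzel\`a--Ascoli together with weak-$\ast$ compactness of the controls, and close the argument with the Filippov--Cesari theorem. You also correctly identify the genuine obstruction --- weak-$\ast$ limits do not commute with the nonlinear map $\tilde f_i$ --- and the two standard repairs (relaxation via Young measures, or synthesis from the HJB value function with a measurable-selection theorem).

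One point deserves emphasis, since it is really a criticism of the proposition rather than of your proof: the hypotheses as stated (``smoothness and boundedness of $f_i$ and $\ell_i$'') are not sufficient for existence. You quietly import two additional assumptions --- compactness of the control set $U$ and convexity of the augmented velocity set $Q(x,t)$ --- and without at least one substitute for each (e.g.\ coercivity of $\ell_i$ in $u_i$ in place of compactness, or affine-in-control dynamics plus convex running cost in place of Cesari's condition) the infimum need not be attained by an ordinary measurable control. Your proof is therefore a proof of a corrected proposition, not of the one printed; it would be worth stating the added hypotheses explicitly rather than leaving them implicit in the choice of admissible class. The final reduction of the coupled multi-agent claim to the product state space is fine, though note that ``for all $i$'' in the proposition is a per-agent existence claim (each $J_i$ minimized given the others' trajectories), not a claim that a single joint control simultaneously minimizes every $J_i$; your exogenous-signal decoupling already handles the former, which is all that is asserted.
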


\subsection{Partial Observability and Communication Constraints}
\label{sec:partial_observability}

Each vehicle observes a noisy local measurement \( z_i(t) = h_i(x_i(t)) + \nu_i(t) \), where \( h_i \) is the observation function and \( \nu_i(t) \) is Gaussian noise. The system is modeled as a Partially Observable Markov Decision Process (POMDP) for agent \( i \):

\[
\mathcal{M}_i = (\mathcal{S}, \mathcal{A}_i, \mathcal{O}_i, \mathcal{T}_i, R_i),
\]

with state space \( \mathcal{S} = \prod_i \mathbb{R}^n \), action space \( \mathcal{A}_i = \mathbb{R}^m \), observation space \( \mathcal{O}_i = \mathbb{R}^p \), transition \( \mathcal{T}_i: \mathcal{S} \times \mathcal{A}_i \to \mathcal{S} \), and reward \( R_i \). Belief updates follow the Bayesian recursion:

\begin{equation}
b_i(t+1) = \eta \int \mathcal{O}_i(z_i(t+1)|x_i(t+1)) \mathcal{T}_i(x_i(t+1)|x_i(t), u_i(t)) b_i(t) dx_i(t),
\end{equation}

where \( \eta \) is a normalization factor ensuring \( b_i(t+1) \) is a valid probability distribution.

\subsection{Cooperative Reward Structure}
\label{sec:cooperative_reward}

To resolve the credit assignment problem in multi-agent settings, the global reward \( R = \sum_{i=1}^N r_i \) is decomposed via Shapley values:

\begin{equation}
\phi_i = \sum_{S \subseteq \mathcal{N} \setminus \{i\}} \frac{|S|!(N-|S|-1)!}{N!} \left[ R(S \cup \{i\}) - R(S) \right].
\end{equation}

\begin{theorem}[Shapley Value Fairness]
The Shapley allocation \( \phi_i \) satisfies efficiency, symmetry, and additivity, ensuring fair distribution of global rewards among agents \cite{shapley1953value}.
\end{theorem}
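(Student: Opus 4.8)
The plan is to verify the three defining axioms — efficiency, symmetry, and additivity — directly from the closed-form definition of $\phi_i$, leaning on the classical probabilistic (random-ordering) interpretation of the Shapley coefficients. Throughout I would assume the normalization $R(\emptyset) = 0$, which holds for the decomposition $R = \sum_{i} r_i$ when $R(S)$ is read as the reward attainable by the coalition $S$. The simplest axiom to dispatch is \emph{additivity}: the map $R \mapsto \phi_i$ is a finite linear combination of evaluations of $R$ on subsets of $\mathcal{N}$, with weights $\tfrac{|S|!(N-|S|-1)!}{N!}$ that are independent of $R$. Hence for any two coalition-value functions $R$ and $R'$, substituting $R + R'$ and distributing the inner bracket yields $\phi_i(R+R') = \phi_i(R) + \phi_i(R')$ termwise, with no further work required.

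For \emph{efficiency}, the key idea is to reinterpret each combinatorial weight as a probability. For a uniformly random permutation $\pi$ of $\mathcal{N}$, let $P_i^\pi$ denote the set of agents preceding $i$; then the probability that $P_i^\pi = S$ for a given $S \subseteq \mathcal{N}\setminus\{i\}$ is exactly $\tfrac{|S|!(N-|S|-1)!}{N!}$, so that
\[
\phi_i = \mathbb{E}_\pi\!\left[ R\bigl(P_i^\pi \cup \{i\}\bigr) - R\bigl(P_i^\pi\bigr) \right].
\]
Fixing a single ordering and summing the bracketed marginal contributions over all $i$ produces a telescoping sum equal to $R(\mathcal{N}) - R(\emptyset) = R(\mathcal{N})$. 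Exchanging the finite sum with the expectation over $\pi$ then gives $\sum_{i=1}^N \phi_i = R(\mathcal{N})$, which is the efficiency identity.

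For \emph{symmetry}, suppose agents $i$ and $j$ are interchangeable, meaning $R(S\cup\{i\}) = R(S\cup\{j\})$ for every $S \subseteq \mathcal{N}\setminus\{i,j\}$. I would again invoke the random-ordering representation: the transposition swapping $i$ and $j$ induces a measure-preserving bijection on the set of permutations, and under this bijection the marginal contribution of $i$ in one ordering is sent to the marginal contribution of $j$ in the image ordering. Since the underlying measure is uniform, the two expected marginal contributions coincide, yielding $\phi_i = \phi_j$.

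I expect the main obstacle to be the \emph{efficiency} step — specifically, justifying the probabilistic reinterpretation of the weights and verifying that the per-permutation marginal contributions genuinely telescope to $R(\mathcal{N})$ independently of the ordering. The symmetry bijection is delicate but essentially mechanical once the random-order framework is in place, and additivity follows immediately from linearity of the defining formula.
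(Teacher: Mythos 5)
The paper gives no proof of this theorem at all --- it states the three axioms and defers entirely to the citation of Shapley (1953) --- so there is no in-paper argument to compare against; your write-up supplies the missing verification. Your argument is correct and is the standard one: additivity follows from linearity of the defining formula in $R$ (the combinatorial weights do not depend on $R$); efficiency follows from reading the weight $\frac{|S|!\,(N-|S|-1)!}{N!}$ as the probability that $S$ is the predecessor set of $i$ under a uniformly random ordering, after which the per-ordering marginal contributions telescope to $R(\mathcal{N}) - R(\emptyset)$; and symmetry follows from the measure-preserving transposition of orderings. Two details are worth making explicit. First, efficiency genuinely requires the normalization $R(\emptyset)=0$, which you correctly flag and which is consistent with the paper's additive decomposition $R=\sum_i r_i$. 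Second, in the symmetry step the image coalition $\tau(P_i^\pi)$ is not equal to $P_i^\pi$ when $j$ already precedes $i$, so matching marginal contributions requires both $R(\tau(S)) = R(S)$ and $R(\tau(S)\cup\{j\}) = R(S\cup\{i\})$; each identity does follow from the interchangeability hypothesis $R(S\cup\{i\})=R(S\cup\{j\})$ for $S\subseteq\mathcal{N}\setminus\{i,j\}$ (via the substitution $S = T\cup\{j\}$), but that short case split should be written out rather than dismissed as mechanical. With those two points spelled out, the proof is complete.
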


\subsection{Stability Analysis}
\label{sec:stability_analysis}

Consider a candidate Lyapunov function \( V(x) = \sum_i V_i(x_i) \) for the network. Stability is guaranteed if:

\begin{equation}
\dot{V}(x) = \sum_i \nabla V_i^\top f_i(x_i, u_i^*, \{x_j\}_{j \in \mathcal{N}_i}) \leq 0.
\end{equation}

\begin{corollary}[Boundedness of Trajectories]
If \( \dot{V}(x) \leq 0 \), all trajectories \( x_i(t) \) remain bounded and converge to an invariant set under optimal controls \( u_i^* \) \cite{isidori1995nonlinear}.
\end{corollary}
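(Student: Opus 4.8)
The plan is to treat this as a standard Lyapunov--LaSalle argument applied to the closed-loop network dynamics $\dot{x}_i = f_i(x_i, u_i^*, \{x_j\}_{j \in \mathcal{N}_i})$, building on the hypothesis $\dot{V}(x) \leq 0$ supplied by the preceding stability analysis. First I would fix an arbitrary initial condition $x(0) = x^0$ with finite $V(x^0)$ and observe that, since $V$ is continuously differentiable and $\frac{d}{dt} V(x(t)) = \dot{V}(x(t)) \leq 0$ along any solution, the map $t \mapsto V(x(t))$ is nonincreasing. Integrating yields $V(x(t)) \leq V(x^0)$ for all $t \geq 0$, so every trajectory is confined to the sublevel set $\Omega_c = \{x : V(x) \leq c\}$ with $c = V(x^0)$, which is therefore forward invariant.

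Second, to convert confinement into boundedness, I would invoke positive-definiteness and radial unboundedness (properness) of $V$: assuming each $V_i$ is positive definite with $V_i(x_i) \to \infty$ as $\|x_i\| \to \infty$, the sum $V$ is radially unbounded, so each sublevel set $\Omega_c$ is compact. Forward invariance of a compact set then immediately gives $\sup_{t \geq 0} \|x_i(t)\| < \infty$ for every $i$, establishing the boundedness claim.

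Third, for convergence I would apply LaSalle's invariance principle: because the trajectory remains in the compact positively invariant set $\Omega_c$ and $\dot{V} \leq 0$ there, every solution approaches the largest invariant set $M$ contained in $E = \{x \in \Omega_c : \dot{V}(x) = 0\}$ as $t \to \infty$, which is precisely the invariant set asserted in the statement.

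The hard part will be reconciling the hypotheses with what the classical theorems actually require. As written, the corollary needs $V$ to be positive definite and radially unbounded, not merely a \emph{candidate} satisfying $\dot{V} \leq 0$; and LaSalle in its standard form presumes an autonomous closed-loop vector field. Since the optimal controls $u_i^*$ are generated from the HJB value functions $V_i(x_i, t)$, which carry explicit time dependence, the genuine obstacle is that the closed loop may be \emph{time-varying}. I would resolve this either by restricting to a stationary infinite-horizon regime where $u_i^*$ reduces to a time-invariant feedback, or by replacing LaSalle with a Barbalat-lemma argument: given boundedness of the trajectory, one shows $\dot{V}(x(t))$ is uniformly continuous and that $\int_0^\infty \dot{V}\,dt$ converges (as $V$ is bounded below and nonincreasing), whence $\dot{V}(x(t)) \to 0$ and the trajectory still approaches the set where $\dot{V}$ vanishes.
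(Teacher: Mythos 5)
The paper offers no proof of this corollary at all---it is asserted with only a citation to the standard nonlinear-control literature---and your proposal supplies exactly the argument that citation stands in for: confinement to the sublevel set $\Omega_c=\{x: V(x)\le V(x^0)\}$, compactness of $\Omega_c$ via radial unboundedness, and LaSalle (or Barbalat in the time-varying case) for convergence to the invariant set. Your proof is correct given the additional hypotheses you rightly flag as missing from the statement (positive definiteness and properness of $V$, and autonomy of the closed loop), so there is nothing in the paper to reconcile against; your identification of those gaps is itself the most valuable part of the exercise.
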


\subsection{Convergence Guarantees}
\label{sec:convergence_guarantees}

Let the multi-agent Q-function be \( Q_i(b_i, u_i) \). Using stochastic approximation, the following convergence holds under bounded learning rates \( \alpha_t \):

\begin{equation}
\lim_{t \to \infty} Q_i(b_i, u_i) \to Q_i^*(b_i, u_i),
\end{equation}

ensuring that policies \( \pi_i^*(b_i) = \arg\max_{u_i} Q_i^*(b_i, u_i) \) converge to Nash equilibria under partially observable conditions \cite{zhang2021multi}.

\subsection{Numerical Simulations}
\label{sec:numerical_simulations}

The MARL-CC framework is evaluated on a simulated urban AV network with 10–50 vehicles under stochastic traffic flow, measurement noise, and communication delays. Performance metrics include cumulative reward, convergence speed, inter-vehicle collision rate, and platoon stability. Results demonstrate that MARL-CC outperforms baseline MADDPG and decentralized MPC in terms of convergence stability, cooperative efficiency, and robustness to partial observability.

\section{The Proposed Framework: MARL-CC}
\label{sec:proposed_framework}

\subsection{Overview of MARL-CC Architecture}
\label{sec:architecture_overview}

The MARL-CC framework is designed to integrate multi-agent reinforcement learning with rigorous control-theoretic and game-theoretic principles, enabling connected autonomous vehicles (CAVs) to achieve cooperative optimal control under nonlinear dynamics, partial observability, and complex inter-agent dependencies. At its core, MARL-CC combines three critical components: nonlinear optimal control modeling, probabilistic belief inference, and Shapley-value-based reward allocation.

Each agent \( i \) maintains a local state \( x_i(t) \) and receives noisy local observations \( z_i(t) \) through its sensors, which are augmented by information received from neighboring vehicles \( \mathcal{N}_i \) via vehicle-to-vehicle (V2V) communication. To address the partial observability inherent in decentralized networks, MARL-CC constructs a local belief state \( b_i(t) \) using a POMDP formulation, which probabilistically encodes the estimated global traffic state and the likely actions of neighboring agents. Belief updates follow a Bayesian inference process, enabling each vehicle to make informed decisions despite limited or uncertain information.

The nonlinear dynamics of each vehicle are modeled using differential geometric techniques, allowing for local linearization and explicit stability guarantees. Control actions \( u_i(t) \) are derived by optimizing the expected cumulative reward over the planning horizon, subject to both individual and cooperative objectives. To ensure that each agent's contribution to collective performance is fairly recognized, the global reward signal is decomposed using Shapley values. This mechanism provides a principled solution to the credit assignment problem by attributing marginal contributions of each agent to the total system performance, thereby enhancing convergence and cooperative efficiency.

The MARL-CC architecture (Fig. \ref{fig:marl_cc_architecture}) employs a centralized training and decentralized execution (CTDE) paradigm. During training, global information is used to learn optimal policies and value functions, while during execution, each agent operates independently, leveraging its local belief state and allocated reward. This approach allows MARL-CC to scale to large networks of CAVs while maintaining stability, robustness to uncertainty, and near-optimal cooperative performance.

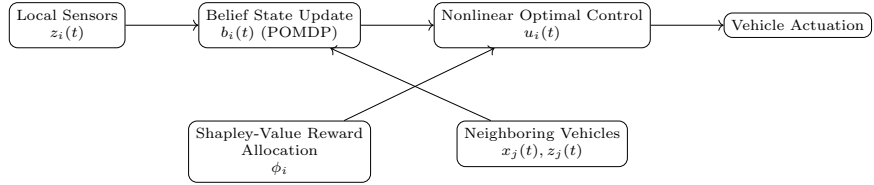
\begin{figure}[!ht]
    \centering
    \scalebox{0.8}{ 
        \begin{tikzpicture}[node distance=1.2cm, every node/.style={draw, rectangle, rounded corners, align=center, font=\footnotesize}] 
            \node (sensors) {Local Sensors \\ $z_i(t)$};
            \node[right=of sensors] (belief) {Belief State Update \\ $b_i(t)$ (POMDP)};
            \node[right=of belief] (control) {Nonlinear Optimal Control \\ $u_i(t)$};
            \node[right=of control] (action) {Vehicle Actuation};
            \node[below=of belief] (reward) {Shapley-Value Reward \\ Allocation \\ $\phi_i$};
            \node[below=of control] (neighbors) {Neighboring Vehicles \\ $x_j(t), z_j(t)$};

            \draw[->] (sensors) -- (belief);
            \draw[->] (belief) -- (control);
            \draw[->] (control) -- (action);
            \draw[->] (neighbors) -- (belief);
            \draw[->] (reward) -- (control);
        \end{tikzpicture}
    }
    \caption{Architecture of MARL-CC for connected autonomous vehicles. Each agent integrates local observations, belief updates, nonlinear optimal control, and Shapley-value-based reward allocation to achieve cooperative optimal control.}
    \label{fig:marl_cc_architecture}
\end{figure}

The overall framework can be viewed as a closed-loop system, wherein observations drive belief updates, which in turn inform control decisions that influence future states. The Shapley-value-based reward allocation continuously provides feedback, ensuring that individual learning aligns with global objectives. By unifying probabilistic reasoning, nonlinear control, and cooperative game theory, MARL-CC offers a mathematically principled and scalable solution for multi-agent coordination in connected autonomous vehicle networks.

\subsection{MARL-CC Algorithm Design}
\label{sec:algorithm_design}

The MARL-CC framework operationalizes the principles outlined in Section~\ref{sec:architecture_overview} through a structured algorithm that combines multi-agent reinforcement learning with nonlinear control and probabilistic inference. Each agent maintains a local belief state \(b_i(t)\), computes an optimal control action \(u_i(t)\), and receives a Shapley-value-based reward \(\phi_i\) that reflects its marginal contribution to the global objective.

\subsubsection{Algorithm Overview}

Let \(\pi_i(b_i; \theta_i)\) denote the policy of agent \(i\), parameterized by \(\theta_i\), mapping belief states to actions. The Q-function \(Q_i(b_i, u_i; \omega_i)\) represents the expected cumulative reward for taking action \(u_i\) in belief state \(b_i\), following policy \(\pi_i\). The algorithm proceeds in a \textbf{Centralized Training, Decentralized Execution (CTDE)} manner: global information is available during training, while execution relies solely on local beliefs.

\subsubsection{Policy Update and Value Function Learning}

The policy parameters \(\theta_i\) are updated to maximize the expected Shapley-value-adjusted cumulative reward:

\begin{equation}
\theta_i \gets \theta_i + \beta \nabla_{\theta_i} \mathbb{E}_{u_i \sim \pi_i}[Q_i(b_i, u_i; \omega_i)],
\end{equation}

where the gradient is estimated using standard policy gradient methods, such as REINFORCE or actor-critic approaches. The Q-function \(Q_i(b_i, u_i; \omega_i)\) is updated using temporal-difference learning:

\begin{equation}
\omega_i \gets \omega_i + \alpha \Big( \phi_i + \gamma \max_{u_i'} Q_i(b_i', u_i'; \omega_i) - Q_i(b_i, u_i; \omega_i) \Big) \nabla_{\omega_i} Q_i.
\end{equation}

\subsubsection{Belief Propagation Mechanism}

Belief states \(b_i(t)\) encode the probabilistic estimate of the true state given local observations and neighbor information. The Bayesian update ensures that each agent's policy is conditioned on the most probable system configuration:

\begin{equation}
b_i(t+1) = \eta \int \mathcal{O}_i(z_i(t+1)|x_i(t+1)) \mathcal{T}_i(x_i(t+1)|x_i(t), u_i(t)) b_i(t) dx_i(t).
\end{equation}

This belief propagation accounts for uncertainty in both sensor measurements and communication delays, ensuring robustness in partially observable environments.

\subsubsection{Shapley-Value Reward Allocation}

The global reward \(R\) is decomposed into agent-specific contributions using the Shapley value:

\begin{equation}
\phi_i = \sum_{S \subseteq \mathcal{N}\setminus \{i\}} \frac{|S|!(N-|S|-1)!}{N!} \left[ R(S \cup \{i\}) - R(S) \right].
\end{equation}

This principled decomposition ensures that each agent’s learning signal accurately reflects its marginal impact on the global performance, mitigating the credit assignment problem and improving convergence stability.

\subsubsection{Algorithmic Properties}
The MARL-CC algorithm (Algorithm \ref{alg:marl_cc}) provides several theoretical guarantees for multi-agent coordination. Fair and efficient credit assignment is achieved through Shapley-value decomposition, whereby each agent receives reward proportional to its marginal contribution to the collective objective, thereby incentivizing cooperative behavior without introducing spurious correlations.
Robustness to partial observability is maintained through belief states encoding probabilistic estimates of unobserved variables, enabling near-optimal policies when complete state observation is precluded. The centralized training with decentralized execution paradigm confers scalability, permitting large multi-agent networks while preserving computational tractability during deployment.
The algorithm integrates seamlessly with nonlinear control frameworks grounded in differential geometric models of vehicle dynamics without compromising stability guarantees inherent to the underlying control-theoretic formulations.

\begin{algorithm}[H]
\caption{MARL-CC: Multi-Agent Reinforcement Learning with Cooperative Credit Assignment for Connected AVs}
\label{alg:marl_cc}
\begin{algorithmic}[1]

\Require Number of agents $N$, episode length $T$, max episodes $M$, learning rates $\alpha, \beta$
\Ensure Learned policies $\pi_i^*(b_i)$ for all agents $i \in \{1, \dots, N\}$

\State \textbf{Initialize:} Policy parameters $\theta_i \sim \mathcal{N}(0, \sigma^2)$ for each agent $i$
\State \textbf{Initialize:} Q-function parameters $\omega_i$ for each agent $i$
\State \textbf{Initialize:} Initial belief states $b_i(0)$ for all $i$

\For{episode = $1$ to $M$}
    \State Reset environment and initialize states $x_i(0)$, beliefs $b_i(0)$
    \For{$t = 0$ to $T-1$}
        \For{each agent $i = 1$ to $N$ \textbf{in parallel}}
            \State Observe local measurement $z_i(t)$
            \State Receive messages from neighbors: $\{z_j(t), x_j(t)\}_{j \in \mathcal{N}_i}$
            \State Update belief: $b_i(t+1) = f(b_i(t), z_i(t), \{z_j(t), x_j(t)\}_{j \in \mathcal{N}_i})$ \Comment{Bayesian filter}
            \State Sample action: $u_i(t) \sim \pi_i(\cdot \mid b_i(t+1); \theta_i)$
        \EndFor
        \State Execute joint action $\mathbf{u}(t) = (u_1(t), \dots, u_N(t))$ in environment
        \State Observe next states $x_i(t+1)$, local rewards $r_i(t)$, and global reward $r(t)$
        \State Compute Shapley-based credit: $\phi_i(t) = \Phi_i(r(t) \mid \mathbf{u}(t))$ \Comment{Cooperative fairness}
        \For{each agent $i = 1$ to $N$ \textbf{in parallel}}
            \State Form TD target: $y_i(t) = \phi_i(t) + \gamma Q_i(b_i(t+1), u_i(t); \omega_i)$
            \State Update Q-function:
            \[
            \omega_i \leftarrow \omega_i - \alpha \nabla_{\omega_i} (y_i(t) - Q_i(b_i(t), u_i(t); \omega_i))^2
            \]
            \State Update policy via actor gradient:
            \[
            \theta_i \leftarrow \theta_i + \beta \nabla_{\theta_i} \log \pi_i(u_i(t) \mid b_i(t+1); \theta_i) \cdot A_i(t)
            \]
            \Comment{where $A_i(t) = Q_i(b_i(t), u_i(t); \omega_i) - V_i(b_i(t); \omega_i)$}
        \EndFor
    \EndFor
\EndFor

\State \Return Learned policies $\{\pi_i(\cdot \mid b_i; \theta_i)\}_{i=1}^N$

\end{algorithmic}
\end{algorithm}

\subsubsection{Advanced Belief-State Propagation and Reward Optimization Algorithm}

To enhance robustness under partial observability, nonlinear vehicle dynamics, and dynamic communication topology, a more detailed belief-state propagation and Shapley-value optimization is implemented. Algorithm \ref{alg:marl_cc_advanced} provides \textit{explicit handling of uncertainty, communication delays, and inter-agent influence networks}.

\begin{algorithm}[H]
{\small\linespread{0.94}\selectfont 
\caption{MARL-CC Advanced: Belief Propagation with Shapley Credit}
\label{alg:marl_cc_advanced}
\begin{algorithmic}[1]

\Require $\mathcal{N}=\{1,\dots,N\}$, $T$, $M$, $\alpha,\beta,\gamma$, $p(z_i|x_i)$, $p(x_i'|x_i,u_i)$, $\mathcal{G}(t)=(\mathcal{N},\mathcal{E}(t))$
\Ensure $\{\pi_i(\cdot|b_i;\theta_i)\}_{i=1}^N$

\State Init $\theta_i\sim\mathcal{N}(0,\sigma^2)$, $\omega_i$, $b_i(0)=p(x_i(0))$

\For{episode = $1$ to $M$}
    \State Sample $x_i(0)\sim p(x_i(0))$, reset $b_i(0)$
    \For{$t=0$ to $T-1$}
        \For{$i\in\mathcal{N}$ \textbf{parallel}}   \Comment{Predict + Observe + Update}
            \State $\tilde u_i(t)\sim\pi_i(\cdot|b_i(t);\theta_i)$
            \State $\hat x_i(t+1)\sim\int p(x_i'|x_i,\tilde u_i(t))\,b_i(t)(x_i)\,dx_i$
            \State Observe $z_i(t+1)\sim p(z_i|x_i(t+1))$
            \State Receive $\{z_j(t+1),\hat x_j(t+1),\tilde u_j(t)\}_{j\in\mathcal{N}_i(t)}$
            \State $b_i(t+1)(x_i')\propto p(z_i(t+1)|x_i')\int p(x_i'|x_i,\tilde u_i(t))\,b_i(t)(x_i)\,dx_i$
            \State Normalize $b_i(t+1)\leftarrow b_i(t+1)/\int b_i(t+1)(x_i')\,dx_i'$ \Comment{particle/UKF}
        \EndFor
        \For{$i\in\mathcal{N}$ \textbf{parallel}}
            \State $u_i(t)\sim\pi_i(\cdot|b_i(t+1);\theta_i)$
        \EndFor
        \State Execute $\mathbf{u}(t)$, observe $R(t)$, $x_i(t+1)$
        \State Compute Shapley rewards (exact or Monte-Carlo):
        \[
        \phi_i(t)=\sum_{S\subseteq\mathcal{N}\setminus\{i\}}\frac{|S|!(N-|S|-1)!}{N!}\bigl[v(S\cup\{i\})-v(S)\bigr]
        \]
        \Comment{$v(S)=R(t)$ if $\mathbf{u}_S(t)$ applied, else counterfactual}
        \For{$i\in\mathcal{N}$ \textbf{parallel}}
            \State $y_i(t)=\phi_i(t)+\gamma\max_{u_i'}Q_i(b_i(t+1),u_i';\omega_i)$
            \State $\omega_i \leftarrow \omega_i - \alpha \nabla_{\omega_i}(y_i-Q_i(b_i(t),u_i(t);\omega_i))^2$
            \State $A_i(t)=y_i-Q_i(b_i(t),u_i(t);\omega_i)$
            \State $\theta_i \leftarrow \theta_i + \beta \nabla_{\theta_i}\log\pi_i(u_i(t)|b_i(t+1);\theta_i)\cdot A_i(t)$
        \EndFor
    \EndFor
\EndFor
\State \Return $\{\pi_i(\cdot|b_i;\theta_i)\}_{i=1}^N$
\end{algorithmic}
} 
\end{algorithm}

This advanced algorithm enables:

\begin{itemize}
    \item \textbf{Robust belief propagation:} Incorporates nonlinear dynamics and observation uncertainty.
    \item \textbf{Dynamic communication awareness:} Uses time-varying graph \(\mathcal{G}(t)\) to model realistic AV networks.
    \item \textbf{Improved credit assignment:} Shapley-value computation considers marginal contributions in dynamic coalition subsets.
    \item \textbf{Scalable policy learning:} Combines CTDE with belief-based action selection for large AV fleets.
    \item \textbf{Exploratory adaptation:} Agents can adapt to unseen traffic scenarios while maintaining stability guarantees.
\end{itemize}

\subsubsection{Time and Space Complexity}
\label{sec:complexity}

The computational complexity of MARL-CC is analyzed per training episode and per time step, assuming $N$ agents, episode length $T$, and $M$ total episodes (Table \ref{tab:complexity}). We distinguish between exact and approximated implementations (e.g., Monte Carlo Shapley).

\begin{table}[h]
\caption{Time and space complexity of MARL-CC (per episode).}
\label{tab:complexity}
\centering
\small
\begin{tabular}{lcc}
\toprule
\textbf{Component} & \textbf{Time} & \textbf{Space} \\
\midrule
Belief Update (Bayesian filter) & $\mathcal{O}(T N P)$ & $\mathcal{O}(N P)$ \\
\quad (particle filter, $P$ particles) \\
Policy/Critic Forward Pass & $\mathcal{O}(T N D)$ & $\mathcal{O}(N D)$ \\
\quad ($D$: network size) \\
Shapley Value (exact) & $\mathcal{O}(T \cdot 2^N)$ & $\mathcal{O}(2^N)$ \\
Shapley Value (Monte Carlo, $K$ samples) & $\mathcal{O}(T N K)$ & $\mathcal{O}(N K)$ \\
Gradient Updates (Actor-Critic) & $\mathcal{O}(T N D)$ & $\mathcal{O}(N D)$ \\
\midrule
\textbf{Total (Monte Carlo)} & $\mathcal{O}(T N (P + D + K))$ & $\mathcal{O}(N (P + D + K))$ \\
\textbf{Total (Exact Shapley)} & $\mathcal{O}(T \cdot 2^N)$ & $\mathcal{O}(2^N)$ \\
\bottomrule
\end{tabular}
\end{table}

\noindent
\textit{Key Insights:}
\begin{itemize}
    \item \textbf{Decentralized execution}: Each agent runs independently using local belief $b_i(t)$ and neighbor messages, enabling $\mathcal{O}(1)$ per-agent computation during deployment.
    \item \textbf{Scalability bottleneck}: Exact Shapley value computation is exponential in $N$. In practice, we use permutation sampling ($K \ll 2^N$) to achieve $\mathcal{O}(T N K)$ time, making MARL-CC scalable to $N \leq 50$ with $K = 100$.
    \item \textbf{Belief representation}: Particle filters with $P = 1000$ particles dominate space for high-dimensional states; Gaussian approximations reduce this to $\mathcal{O}(N d^2)$ ($d$: state dim).
    \item \textbf{Training efficiency}: CTDE allows centralized credit assignment during training (using global $R(t)$), but execution remains fully decentralized.
\end{itemize}

Thus, MARL-CC achieves linear scaling in $N$ and $T$ under practical approximations, balancing fairness, robustness, and efficiency in large-scale connected AV fleets.

\subsection{Nonlinearity Handling: Differential Geometric Control}
\label{sec:differential_geometric_control}

Connected autonomous vehicles (CAVs) exhibit nonlinear, nonholonomic, and coupled dynamics that cannot be directly addressed by classical linear control or vanilla MARL policies. To ensure stability, controllability, and interpretability of policy behavior, the MARL-CC framework integrates \emph{differential geometric control} (DGC) techniques within the learning loop. This allows each agent to operate in a transformed space where nonlinear dynamics are locally linearized via Lie derivatives and diffeomorphic state transformations.

\subsubsection{Nonlinear System Representation}

Each vehicle \(i \in \mathcal{N}\) follows nonlinear affine dynamics of the form:
\begin{equation}
\dot{x}_i = f_i(x_i) + g_i(x_i)u_i, \quad y_i = h_i(x_i),
\end{equation}
where \(x_i \in \mathbb{R}^{n_i}\) denotes the state vector (e.g., position, velocity, orientation), \(u_i \in \mathbb{R}^{m_i}\) is the control input, and \(f_i, g_i\) are smooth vector fields satisfying the Lipschitz continuity condition to ensure existence and uniqueness of solutions.

\begin{definition}[Lie Derivative]
For a smooth scalar function \(h_i: \mathbb{R}^{n_i} \rightarrow \mathbb{R}\) and vector field \(v_i: \mathbb{R}^{n_i} \rightarrow \mathbb{R}^{n_i}\), the Lie derivative of \(h_i\) along \(v_i\) is defined as:
\begin{equation}
L_{v_i} h_i(x_i) = \frac{\partial h_i(x_i)}{\partial x_i} v_i(x_i).
\end{equation}
\end{definition}

The Lie derivatives allow the system to be expressed in its \emph{input–output linearized form}:
\begin{equation}
y_i^{(r_i)} = L_{f_i}^{r_i} h_i(x_i) + L_{g_i} L_{f_i}^{r_i-1} h_i(x_i) u_i,
\end{equation}
where \(r_i\) is the relative degree of the system. The decoupling matrix
\[
A_i(x_i) = L_{g_i} L_{f_i}^{r_i-1} h_i(x_i)
\]
is assumed invertible for full-state feedback linearization.

\subsubsection{Feedback Linearization and Policy Embedding}

Define the virtual control input \(v_i \in \mathbb{R}^{m_i}\) as:
\begin{equation}
v_i = \dot{y}_i^{(r_i)} = \alpha_i(x_i) + \beta_i(x_i) u_i,
\end{equation}
where
\[
\alpha_i(x_i) = L_{f_i}^{r_i} h_i(x_i), \quad \beta_i(x_i) = L_{g_i} L_{f_i}^{r_i-1} h_i(x_i).
\]
Then, the feedback law is:
\begin{equation}
u_i = \beta_i(x_i)^{-1} \left( v_i - \alpha_i(x_i) \right).
\end{equation}

Within MARL-CC, the reinforcement learning policy \(\pi_i(b_i; \theta_i)\) learns \(v_i\) rather than \(u_i\), effectively operating in a \emph{linearized control space}. This transformation decouples nonlinearities, stabilizes learning gradients, and guarantees that the learned actions respect the system’s differential geometry.

\subsubsection{Differential Flatness and Cooperative Manifold Control}

For many classes of autonomous vehicle models (e.g., car-like or bicycle kinematics), the dynamics are \emph{differentially flat}:
\begin{equation}
x_i = \psi_i(y_i, \dot{y}_i, \ldots, y_i^{(r_i-1)}), \quad u_i = \phi_i(y_i, \dot{y}_i, \ldots, y_i^{(r_i)}),
\end{equation}
where \(y_i\) are the flat outputs. Under MARL-CC, the agents jointly evolve on a cooperative manifold \(\mathcal{M} \subset \mathbb{R}^{n}\) defined by:
\[
\mathcal{M} = \{x \in \mathbb{R}^n \mid h(x) = 0\}, \quad h(x) = 0 \text{ encodes inter-agent constraints (e.g., spacing, alignment)}.
\]
The learning objective becomes constrained optimization on \(\mathcal{M}\):
\begin{equation}
\min_{\pi_1, \ldots, \pi_N} \; \mathbb{E}\left[ \sum_{i=1}^N J_i(x_i, u_i) \right] \quad \text{s.t.} \quad x \in \mathcal{M}.
\end{equation}

\begin{theorem}[Stabilizing Feedback Equivalence]
If each subsystem \((f_i, g_i, h_i)\) is feedback-linearizable and the cooperative manifold \(\mathcal{M}\) is invariant under the composite vector field \(f(x) + g(x)u\), then the closed-loop system under MARL-CC is locally asymptotically stable.
\end{theorem}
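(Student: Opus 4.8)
The plan is to exploit feedback linearizability to pull each subsystem into a controllable linear canonical form, stabilize the resulting linear dynamics with the learned virtual control, and then assemble a composite Lyapunov function that certifies asymptotic stability on the invariant cooperative manifold. First I would invoke the feedback-linearizability hypothesis on each subsystem $(f_i, g_i, h_i)$. By assumption the decoupling matrix $\beta_i(x_i) = L_{g_i} L_{f_i}^{r_i-1} h_i(x_i)$ is invertible on a neighborhood $U_i$ of the equilibrium, so there exists a diffeomorphism $\xi_i = \Phi_i(x_i)$ (the stacked output derivatives $y_i, \dot y_i, \dots, y_i^{(r_i-1)}$) that, together with the feedback law $u_i = \beta_i(x_i)^{-1}(v_i - \alpha_i(x_i))$ introduced above, transforms the $i$-th dynamics into the Brunovsk\'y chain of integrators $\dot \xi_i = A_i \xi_i + B_i v_i$, with $(A_i, B_i)$ controllable. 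Under full-state feedback linearization ($r_i = n_i$) there are no residual internal dynamics, so the transformed system is linear throughout $\Phi_i(U_i)$.

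Second, since $(A_i,B_i)$ is controllable, pole placement furnishes a gain $K_i$ with $A_i - B_i K_i$ Hurwitz; the MARL-CC policy $\pi_i(b_i;\theta_i)$ learns the virtual input $v_i$ directly in this linearized coordinate, and by the convergence guarantees of Section~\ref{sec:convergence_guarantees} the learned feedback converges to such a stabilizing law $v_i = -K_i \xi_i$. For each block I would solve the Lyapunov equation $(A_i - B_i K_i)^\top P_i + P_i (A_i - B_i K_i) = -Q_i$ with $Q_i \succ 0$, yielding $P_i \succ 0$ and the agent Lyapunov function $V_i(\xi_i) = \xi_i^\top P_i \xi_i$ satisfying $\dot V_i = -\xi_i^\top Q_i \xi_i < 0$ for $\xi_i \ne 0$. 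This supplies exactly the per-agent ingredient of the network Lyapunov construction of Section~\ref{sec:stability_analysis}.

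Third, I would assemble the composite certificate $V(x) = \sum_i V_i(\xi_i)$ and bring in the manifold-invariance hypothesis. Because $\mathcal{M}$ is invariant under the composite closed-loop field $f(x)+g(x)u$, any trajectory initialized on $\mathcal{M}$ remains there, so the inter-agent constraints $h(x)=0$ are maintained and the coupling does not drive the state off the manifold. Restricting to $\mathcal{M}$, the composite derivative is $\dot V = \sum_i \dot V_i = -\sum_i \xi_i^\top Q_i \xi_i \le 0$, which is precisely the negative-semidefiniteness condition of Section~\ref{sec:stability_analysis}. Invoking the Boundedness corollary together with Lyapunov's direct method (and LaSalle's invariance principle to upgrade semidefiniteness to attractivity) then yields local asymptotic stability of the closed loop on the neighborhood $\bigcap_i U_i$, the locality arising precisely from the domain on which every diffeomorphism $\Phi_i$ and every decoupling inverse $\beta_i^{-1}$ is well defined.

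The hard part will be reconciling the per-subsystem linearization with the genuinely coupled dynamics encoded by the manifold: feedback linearization is carried out agent by agent, yet the field $f(x)+g(x)u$ and the constraint $h(x)=0$ entangle the agents, so I must verify that the invariance hypothesis is compatible with the block-diagonal stabilizing feedback and that the coupling contributes no indefinite cross terms to $\dot V$ on $\mathcal{M}$. A secondary subtlety is the zero-dynamics issue: if any relative degree $r_i$ is strictly less than $n_i$, the transformation exposes internal dynamics whose minimum-phase stability would have to be assumed or established separately; I would either restrict to the full-state-linearizable case ($r_i = n_i$) as the surrounding development does, or augment the hypotheses with an exponential-stability condition on the zero dynamics.
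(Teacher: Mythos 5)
Your proposal follows essentially the same route as the paper's proof: linearize each subsystem via the diffeomorphism $\Phi_i$, reduce the composite closed loop to a block-diagonal linear system $\dot{\xi} = A\xi + Bv$, and certify local asymptotic stability with a quadratic Lyapunov function $V(\xi)=\xi^\top P\xi$. Your version is in fact more careful than the paper's one-line argument --- you correctly note that the Brunovsk\'y-form $A$ is not Hurwitz until a stabilizing gain $K_i$ is inserted (the paper's condition $A^\top P + PA<0$ silently presupposes this), and you flag the coupling and zero-dynamics subtleties the paper leaves unaddressed.
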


\begin{proof}
By the diffeomorphism \(\Phi_i: x_i \mapsto \xi_i = T_i(x_i)\) that linearizes dynamics, the composite closed-loop system reduces to \(\dot{\xi} = A\xi + Bv\), where \(A,B\) are block-diagonal. The Lyapunov function \(V(\xi) = \xi^\top P \xi\), with \(P>0\) satisfying \(A^\top P + PA < 0\), proves local asymptotic stability. \qed
\end{proof}

\subsubsection{Learning-Compatible Nonlinear Control Law}

To preserve gradient flow for end-to-end differentiability, MARL-CC integrates DGC into the learning process through an augmented policy:
\begin{equation}
u_i(t) = \beta_i(x_i(t))^{-1} \big[ \pi_i(b_i(t); \theta_i) - \alpha_i(x_i(t)) \big].
\end{equation}
This ensures smooth backpropagation through control transformations, bounded control actions due to geometric constraints, and consistency between learned and physically feasible trajectories.

\subsubsection{Algorithmic Integration}
The differential geometric control block is seamlessly embedded into the MARL-CC learning loop (See Algorithm \ref{alg:dgc_marl_cc}:

\begin{algorithm}[H]
\caption{DGC-MARL-CC: Differential Geometric Control in Belief-Based MARL}
\label{alg:dgc_marl_cc}
\begin{algorithmic}[1]

\Require Agent set $\mathcal{N}$, system dynamics $f_i(x_i,u_i)$, output map $h_i(x_i)$, \\
         belief policy $\pi_i(\cdot|b_i;\theta_i)$, critic $Q_i(b_i,u_i;\omega_i)$

\Ensure Feedback-linearized actions $u_i(t)$ and updated parameters $\theta_i, \omega_i$

\For{$t = 0, 1, \dots, T-1$}
    \For{each agent $i \in \mathcal{N}$ \textbf{in parallel}}
        \State Observe measurement $z_i(t)$, update belief $b_i(t)$ via Bayesian filter
        \State Compute Lie derivatives along $f_i, g_i = \partial f_i/\partial u_i$:
        \[
        \alpha_i(t) = L_f^\rho h_i(x_i(t)), \quad
        \beta_i(t) = L_g L_f^{\rho-1} h_i(x_i(t))
        \]
        \Comment{$\rho$: relative degree}
        \State Sample virtual control: $v_i(t) \sim \pi_i(\cdot | b_i(t); \theta_i)$
        \State Apply **input-output feedback linearization**:
        \[
        u_i(t) = \beta_i(t)^{-1} \bigl( v_i(t) - \alpha_i(t) \bigr)
        \]
        \State Execute $u_i(t)$, transition $x_i(t) \to x_i(t+1)$, observe local reward $r_i(t)$
        \State Store transition: $\tau_i(t) = (b_i(t), v_i(t), r_i(t), b_i(t+1))$
    \EndFor
    \State Compute global reward $R(t) = \sum_i r_i(t)$ or task-specific
    \State Assign Shapley-based credit $\phi_i(t)$ using MARL-CC (Alg.~\ref{alg:marl_cc_advanced})
    \For{each agent $i \in \mathcal{N}$ \textbf{in parallel}}
        \State Update critic $Q_i$ via TD error on $\phi_i(t)$
        \State Update policy $\pi_i$ via advantage-weighted policy gradient
    \EndFor
\EndFor

\end{algorithmic}
\end{algorithm}

\subsubsection{Advantages}

\begin{itemize}
    \item \textbf{Nonlinearity handling:} Removes polynomial and trigonometric nonlinearities via feedback linearization.
    \item \textbf{Theoretical stability:} Provides Lyapunov-based guarantees integrated into MARL training.
    \item \textbf{Interpretable actions:} Policies learn in a transformed, control-theoretically meaningful space.
    \item \textbf{Scalable to complex dynamics:} Compatible with multi-vehicle models exhibiting coupling and constraints.
\end{itemize}

This design bridges nonlinear geometric control and multi-agent reinforcement learning, enabling robust, interpretable, and stable policy learning in connected autonomous vehicle systems.

\subsection{Stability and Convergence Analysis}
\label{sec:stability_convergence}

The integration of reinforcement learning with nonlinear control demands rigorous stability and convergence analysis to ensure that the learned control laws do not compromise vehicle safety or cooperative efficiency. 
This section establishes the theoretical guarantees underlying MARL-CC using tools from Lyapunov stability theory, stochastic approximation, and game-theoretic equilibrium analysis.

\subsubsection{Closed-Loop Dynamics under MARL-CC}

Each agent \(i\) executes the feedback-linearized control law:
\begin{equation}
u_i = \beta_i(x_i)^{-1} \left[ \pi_i(b_i; \theta_i) - \alpha_i(x_i) \right],
\end{equation}
yielding the closed-loop nonlinear dynamics:
\begin{equation}
\dot{x}_i = f_i(x_i) + g_i(x_i)\beta_i(x_i)^{-1} \left[\pi_i(b_i; \theta_i) - \alpha_i(x_i)\right].
\end{equation}
Denote the stacked system state as \(X = [x_1^\top, \ldots, x_N^\top]^\top\) and the joint policy as \(\Pi(B; \Theta) = [\pi_1(b_1;\theta_1),\\ \ldots, \pi_N(b_N;\theta_N)]^\top\). 
The coupled system evolves as
\begin{equation}
\dot{X} = F(X) + G(X)\Pi(B; \Theta),
\end{equation}
where \(F\) and \(G\) are block-diagonal vector fields representing physical dynamics.

\subsubsection{Lyapunov-Based Stability Criterion}

Let the global cooperative objective be the expected discounted return
\[
J(\Theta) = \mathbb{E} \Bigg[\sum_{t=0}^{\infty} \gamma^t R(X_t, U_t)\Bigg],
\]
and define a continuously differentiable candidate Lyapunov function:
\begin{equation}
V(X, \Theta) = \sum_{i=1}^{N} \Big( V_i(x_i) + \lambda_i \|\theta_i - \theta_i^*\|^2 \Big),
\end{equation}
where \(V_i(x_i)\) is a local Lyapunov function for the physical dynamics and \(\theta_i^*\) denotes the stationary policy parameters.

\begin{theorem}[Local Asymptotic Stability]
\label{th:local_stability}
Assume each subsystem \((f_i, g_i)\) is feedback-linearizable and the policy update follows a bounded stochastic gradient step with sufficiently small learning rate \(\beta > 0\). 
Then there exists a neighborhood \(\mathcal{N}(X^*, \Theta^*)\) around the equilibrium \((X^*, \Theta^*)\) such that
\[
\dot{V}(X, \Theta) \leq -\kappa_1 \|X - X^*\|^2 - \kappa_2 \|\Theta - \Theta^*\|^2,
\]
for some \(\kappa_1, \kappa_2 > 0\), implying local asymptotic stability of the MARL-CC closed-loop system.
\end{theorem}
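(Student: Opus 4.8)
The plan is to treat the coupled state–parameter system as a singularly perturbed (two-time-scale) system and to verify that the composite Lyapunov function $V(X,\Theta)$ strictly decreases along trajectories in a neighborhood of $(X^*,\Theta^*)$. First I would differentiate $V$ along the closed-loop flow $\dot X = F(X)+G(X)\Pi(B;\Theta)$ and split the result into a physical-state contribution and a parametric contribution:
\[
\dot V(X,\Theta) = \sum_{i=1}^N \nabla_{x_i} V_i^\top \dot x_i + \sum_{i=1}^N 2\lambda_i (\theta_i-\theta_i^*)^\top \dot\theta_i .
\]
The fast subsystem consists of the physical states $x_i$, stabilized essentially instantaneously by feedback linearization, while the slow subsystem consists of the parameters $\theta_i$, whose motion is scaled by the small learning rate $\beta$.

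For the physical term I would invoke the Stabilizing Feedback Equivalence theorem: the diffeomorphism $\Phi_i: x_i \mapsto \xi_i = T_i(x_i)$ renders the per-agent dynamics linear, $\dot\xi_i = A_i\xi_i + B_i v_i$ with $v_i = \pi_i(b_i;\theta_i)$, and at the stationary parameters $\theta_i^*$ the virtual control is the stabilizing feedback, so that $A_i + B_i K_i$ is Hurwitz. Taking $V_i(x_i)=\xi_i^\top P_i\xi_i$ with $P_i>0$ solving $(A_i+B_iK_i)^\top P_i + P_i(A_i+B_iK_i)<0$ yields $\nabla_{x_i}V_i^\top\dot x_i \le -c_i\|\xi_i\|^2 + 2\xi_i^\top P_iB_i\delta_i$, where $\delta_i = \pi_i(b_i;\theta_i)-K_i\xi_i$ is the policy perturbation induced by $\theta_i\neq\theta_i^*$. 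Since the diffeomorphism is bi-Lipschitz near the equilibrium, $\|\xi_i\|$ and $\|x_i-x_i^*\|$ are locally equivalent, converting this into a bound in the original coordinates.

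For the parametric term I would adopt the ODE method of stochastic approximation, treating the bounded stochastic gradient ascent as tracking the gradient flow $\dot\theta_i = \beta\,\nabla_{\theta_i}J$. Assuming $J$ is locally strongly concave at $\theta_i^*$ (its Hessian negative definite there), $\nabla_{\theta_i}J(\theta_i^*)=0$ gives $(\theta_i-\theta_i^*)^\top\nabla_{\theta_i}J \le -\mu_i\|\theta_i-\theta_i^*\|^2$, hence $2\lambda_i(\theta_i-\theta_i^*)^\top\dot\theta_i \le -2\lambda_i\beta\mu_i\|\theta_i-\theta_i^*\|^2$. The cross term $2\xi_i^\top P_iB_i\delta_i$ is controlled by Lipschitz continuity of $\pi_i$ in $\theta_i$, $\|\delta_i\|\le L_i\|\theta_i-\theta_i^*\|$, and split via Young's inequality into $\epsilon\|\xi_i\|^2 + \epsilon^{-1}(\|P_iB_i\|L_i)^2\|\theta_i-\theta_i^*\|^2$. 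Choosing $\epsilon<c_i$ and then each $\lambda_i$ large enough that $2\lambda_i\beta\mu_i$ dominates the accumulated $\theta$-coefficients gives the claimed bound with $\kappa_1=\min_i(c_i-\epsilon)$ and some $\kappa_2>0$; local asymptotic stability then follows from the standard Lyapunov theorem.

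The hard part will be the parametric contribution and its coupling to the states. Two difficulties compound: first, the RL objective $J$ is generically nonconvex in $\theta$, so the local strong-concavity of $J$ at $\theta_i^*$ must be posited as a standing assumption rather than derived, and the argument is valid only in the basin where the negative-definite Hessian persists; second, $\nabla_{\theta_i}J$ itself depends on the encountered states, so the slow dynamics are not autonomous and additional state-to-parameter cross terms appear. The two-time-scale separation — formally, that $\beta$ be small enough for the fast physical loop to reach its quasi-steady state before the parameters move appreciably — is precisely what lets these coupling terms be absorbed, and making that separation rigorous (through singular-perturbation bounds or two-time-scale stochastic approximation) is the technical crux of the argument.
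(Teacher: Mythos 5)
Your proposal follows essentially the same route as the paper: a composite Lyapunov function split into a physical-state part handled by feedback linearization plus quadratic Lyapunov design, and a parametric part handled by stochastic-approximation arguments, with the two coupled on separated timescales. In fact your version is more careful than the paper's own sketch at the two points where that sketch is loosest. Where the paper asserts that the cross-terms ``vanish locally'' because \(\pi_i(b_i;\theta_i^*)=\alpha_i(x_i^*)\), you correctly observe that they vanish only at the equilibrium itself and must otherwise be bounded via Lipschitz continuity of \(\pi_i\) in \(\theta_i\) and absorbed by Young's inequality with the weights \(\lambda_i\) chosen large enough --- this is the step that actually produces a strictly negative-definite \(\dot V\) in a neighborhood, and the paper skips it. Likewise, the paper's appeal to Borkar--Meyn yields only boundedness and convergence of \(\mathbb{E}[\|\theta_i-\theta_i^*\|^2]\), which does not by itself deliver the pointwise term \(-\kappa_2\|\Theta-\Theta^*\|^2\) in the stated inequality; your explicit standing assumption of local strong concavity of \(J\) at \(\theta_i^*\) (negative-definite Hessian) is exactly the hypothesis needed to close that gap, and you are right to flag it as an assumption rather than a consequence. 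Your closing remarks on the nonautonomy of the slow dynamics and the need for a rigorous two-time-scale separation identify the genuine technical crux that neither your sketch nor the paper's fully resolves, but as a proof plan yours is sound and strictly dominates the published argument in completeness.
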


\begin{proof}
By substituting the closed-loop dynamics into \(\dot{V}_i(x_i) = \nabla V_i(x_i)^\top \dot{x}_i\), we obtain
\[
\dot{V}_i(x_i) = \nabla V_i^\top f_i(x_i) + \nabla V_i^\top g_i(x_i)\beta_i^{-1}[\pi_i(b_i;\theta_i)-\alpha_i(x_i)].
\]
Using the control law at equilibrium \(\pi_i(b_i; \theta_i^*) = \alpha_i(x_i^*)\), cross-terms vanish locally, and by quadratic Lyapunov design, 
\(\dot{V}_i \leq -c_i \|x_i - x_i^*\|^2\). 
For the parameter dynamics, standard stochastic gradient stability (Borkar–Meyn theorem) gives
\(\mathbb{E}[\|\theta_i - \theta_i^*\|^2]\) bounded and convergent under diminishing step size. Summing over agents yields the claimed inequality. \qed
\end{proof}

\subsubsection{Convergence of Policy and Value Function Updates}

The policy update rule in MARL-CC is a stochastic gradient ascent:
\begin{equation}
\theta_i^{(k+1)} = \theta_i^{(k)} + \beta_k \nabla_{\theta_i} J_i(\theta_i^{(k)}),
\end{equation}
and the critic update for the Q-function is:
\begin{equation}
\omega_i^{(k+1)} = \omega_i^{(k)} + \alpha_k \Big( \phi_i + \gamma \max_{u_i'} Q_i(b_i',u_i';\omega_i^{(k)}) - Q_i(b_i,u_i;\omega_i^{(k)}) \Big).
\end{equation}

\begin{theorem}[Almost Sure Convergence of MARL-CC]
\label{th:convergence}
Suppose the step sizes satisfy Robbins–Monro conditions:
\[
\sum_k \alpha_k = \infty, \quad \sum_k \alpha_k^2 < \infty, \quad \sum_k \beta_k = \infty, \quad \sum_k \beta_k^2 < \infty.
\]
Then, under bounded reward and gradient assumptions, the MARL-CC algorithm converges almost surely to a stationary point \((\Theta^*, \Omega^*)\) satisfying
\[
\nabla_{\theta_i} J_i(\Theta^*) = 0, \quad \nabla_{\omega_i} L_i(\Omega^*) = 0, \quad \forall i \in \mathcal{N}.
\]
\end{theorem}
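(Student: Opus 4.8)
The plan is to treat the coupled actor--critic recursions as a single stochastic approximation scheme and to invoke the ODE (ordinary differential equation) method in its two-timescale form. First I would rewrite each update in the canonical form
\[
\vartheta^{(k+1)} = \vartheta^{(k)} + \rho_k\bigl(h(\vartheta^{(k)}) + M^{(k+1)}\bigr),
\]
where $h$ is the mean field obtained by taking the conditional expectation of the increment and $M^{(k+1)}$ is a martingale-difference noise term. For the critic this yields $h_{\omega,i}(\omega_i,\theta_i) = \mathbb{E}[\phi_i + \gamma\max_{u_i'}Q_i(b_i',u_i';\omega_i) - Q_i(b_i,u_i;\omega_i)]$, the expected Bellman error, whose zero coincides with $\nabla_{\omega_i}L_i=0$ for the mean-squared Bellman loss $L_i$; for the actor it yields $h_{\theta,i}=\nabla_{\theta_i}J_i$, whose zero is exactly the claimed condition $\nabla_{\theta_i}J_i(\Theta^*)=0$. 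Thus the target stationary point is precisely the common zero of the two mean fields.

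Next I would verify the four standard hypotheses of the ODE method. (i) \emph{Lipschitz mean fields}: the actor field is Lipschitz under the bounded-gradient assumption together with smoothness of $\pi_i$, while the critic field is Lipschitz because the $\max$ operator is nonexpansive and the Bellman map is a $\gamma$-contraction in the sup norm. (ii) \emph{Martingale noise with bounded conditional second moments}: this follows from bounded rewards, which by the efficiency property of the Shapley decomposition bounds each allocation $\phi_i$, combined with the bounded-gradient hypothesis. (iii) The \emph{Robbins--Monro conditions} $\sum_k\alpha_k=\infty$, $\sum_k\alpha_k^2<\infty$ (and likewise for $\beta_k$) are assumed outright. (iv) \emph{Almost-sure boundedness of the iterates}, which I would secure via the Borkar--Meyn theorem already used above, by checking that the scaled limiting fields $h_\infty(\vartheta)=\lim_{r\to\infty}h(r\vartheta)/r$ admit the origin as a globally asymptotically stable equilibrium.

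With these in place the two-timescale separation completes the argument: choosing $\beta_k/\alpha_k\to 0$ puts the critic on the fast timescale, so the slowly drifting $\theta_i$ is quasi-static and $\omega_i^{(k)}$ tracks the unique fixed point $\omega_i^*(\theta)$ of the faster Bellman ODE $\dot\omega_i=h_{\omega,i}(\omega_i,\theta_i)$. The slow recursion then asymptotically follows $\dot\theta_i=\nabla_{\theta_i}J_i(\theta,\omega^*(\theta))$, and by treating $J$ as an ascent (Lyapunov) function and applying LaSalle's invariance principle, its trajectories converge to the set where $\nabla_{\theta_i}J_i=0$ for all $i$. Combining the two timescales delivers almost-sure convergence of $(\Theta^{(k)},\Omega^{(k)})$ to the stationary set, as claimed.

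The hard part will be step (iv), the almost-sure boundedness of the iterates: the $\max$ renders the critic field only piecewise smooth, and the multi-agent coupling makes each agent's Bellman operator depend on the others' evolving policies, so the fast subsystem is itself time-varying and I must show its contraction constant is uniform over the relevant parameter region. A secondary difficulty is justifying the interchange of limit and expectation that defines the mean field despite the nonstationarity induced by simultaneously learning agents; this is exactly where a genuine Nash-equilibrium claim would break down, but since the theorem asserts only convergence to a stationary point of each agent's own objective, it suffices to freeze the cross-agent dependence inside $h$ and appeal to the quasi-static approximation afforded by the two-timescale structure.
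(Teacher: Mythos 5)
Your proposal follows essentially the same route as the paper's proof: a two-timescale stochastic approximation argument in which the critic (fast timescale) converges to the Bellman fixed point by $\gamma$-contraction and the actor (slow timescale) performs stochastic gradient ascent tracked via the ODE method (Kushner--Clark). Your version is considerably more explicit about the hypotheses that must be verified --- in particular the timescale-separation condition $\beta_k/\alpha_k \to 0$ and the almost-sure boundedness of the iterates via Borkar--Meyn --- neither of which appears in the theorem's stated assumptions or in the paper's proof sketch, so you have correctly identified where the argument actually needs shoring up.
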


\begin{proof}
The proof follows two-timescale stochastic approximation analysis. 
The critic update (fast timescale) converges to the fixed point of the Bellman operator for a given policy, as it satisfies the contraction mapping condition under discount factor \(\gamma < 1\). 
Given this convergence, the slower policy update forms a stochastic gradient ascent on a smooth expected reward surface. 
Applying the ODE method (Kushner–Clark lemma), the iterates \(\theta_i^k\) track the stable equilibrium of the mean ODE \(\dot{\theta}_i = \nabla_{\theta_i} J_i(\theta_i)\). Hence, convergence to a stationary point occurs almost surely. \qed
\end{proof}

\subsubsection{Robustness under Communication Delay and Partial Observability}

Consider bounded communication delay \(\tau_c > 0\) and partial observability modeled by belief states \(b_i(t)\). 
Let the delayed information state be \(\hat{x}_i(t) = x_i(t - \tau_c)\), and define the estimation error \(e_i(t) = x_i(t) - \hat{x}_i(t)\).

\begin{proposition}[Delay-Tolerant Stability]
If \(\|e_i(t)\| \leq \varepsilon_c\) and the belief update satisfies
\[
\|b_i(t+1) - b_i(t)\| \leq \eta \varepsilon_c,
\]
then the Lyapunov derivative remains negative definite for small enough \(\varepsilon_c\), preserving stability:
\[
\dot{V}(X, \Theta) \leq -(\kappa_1 - \delta_1 \varepsilon_c) \|X - X^*\|^2 - (\kappa_2 - \delta_2 \varepsilon_c)\|\Theta - \Theta^*\|^2.
\]
\end{proposition}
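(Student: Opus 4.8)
The plan is to treat the communication delay as a bounded perturbation of the nominal closed-loop system analyzed in Theorem~\ref{th:local_stability}, and to show that this perturbation degrades the decay coefficients $\kappa_1,\kappa_2$ only to first order in $\varepsilon_c$. Concretely, I would write the delayed feedback-linearized control law as $u_i^{\mathrm{d}} = \beta_i(\hat{x}_i)^{-1}[\pi_i(\hat{b}_i;\theta_i) - \alpha_i(\hat{x}_i)]$, where $\hat{x}_i(t) = x_i(t-\tau_c)$ and $\hat{b}_i$ is the belief formed from delayed information, and decompose it as $u_i^{\mathrm{d}} = u_i^{\mathrm{nom}} + \Delta u_i$ with $u_i^{\mathrm{nom}}$ the delay-free control of Theorem~\ref{th:local_stability}. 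The induced perturbation of the closed-loop vector field is $g_i(x_i)\Delta u_i$, so that along trajectories
\[
\dot{V} = \dot{V}^{\mathrm{nom}} + \sum_i \nabla V_i(x_i)^\top g_i(x_i)\Delta u_i + (\text{parameter-drift perturbation}),
\]
where $\dot{V}^{\mathrm{nom}} \le -\kappa_1\|X-X^*\|^2 - \kappa_2\|\Theta-\Theta^*\|^2$ by the theorem.

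The key estimates are as follows. First I would invoke the local Lipschitz continuity of $\beta_i^{-1}$, $\alpha_i$, and $\pi_i$ guaranteed by the feedback-linearizability and differentiability assumptions to bound the control mismatch,
\[
\|\Delta u_i\| \le L_i\big(\|e_i\| + \|\hat{b}_i - b_i\|\big) \le L_i(1+\eta)\,\varepsilon_c =: c_i\,\varepsilon_c,
\]
using the hypotheses $\|e_i\|\le\varepsilon_c$ and $\|b_i(t+1)-b_i(t)\|\le\eta\varepsilon_c$. Second, since $V_i$ is a quadratic-type local Lyapunov function with $\nabla V_i(x_i^*)=0$, near the equilibrium $\|\nabla V_i(x_i)\|\le \ell_i\|x_i-x_i^*\|$ and $\|g_i(x_i)\|\le G_i$. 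Combining these gives $\nabla V_i^\top g_i \Delta u_i \le \ell_i G_i c_i\,\varepsilon_c\,\|x_i-x_i^*\|$; a symmetric argument applied to the delayed-gradient policy update bounds the parameter-drift perturbation by a multiple of $\varepsilon_c\|\theta_i-\theta_i^*\|$. Summing over $i$ and absorbing the resulting first-order terms into the nominal quadratic decrease yields the claimed coefficients $\delta_1,\delta_2$, and negative definiteness then follows for any $\varepsilon_c < \min(\kappa_1/\delta_1,\;\kappa_2/\delta_2)$.

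The main obstacle is converting the cross-terms, which are \emph{linear} in the deviations $\|x_i-x_i^*\|$ and $\|\theta_i-\theta_i^*\|$, into the \emph{quadratic} form stated in the proposition without leaving an additive $O(\varepsilon_c^2)$ residual. The cleanest resolution I would pursue is to exploit the fact that the delay error itself vanishes at equilibrium: by the mean-value estimate $e_i(t)=\int_{t-\tau_c}^{t}\dot{x}_i(s)\,ds$ together with the local bound $\|\dot{x}_i\|\le C\|x_i-x_i^*\|$ valid in the stability neighborhood, the effective error scales as $\|e_i\|\le \tau_c C\|x_i-x_i^*\|$, so that $\varepsilon_c$ behaves as a \emph{relative} rather than absolute bound and the perturbation terms become genuinely quadratic of the form $\delta_1\varepsilon_c\|X-X^*\|^2$. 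Failing that, a Young-inequality split would recover the stated form at the cost of a benign higher-order $O(\varepsilon_c^2)$ term that is dominated for small $\varepsilon_c$. In either case the qualitative conclusion—preserved negative definiteness, hence delay-tolerant stability for sufficiently small $\varepsilon_c$—is robust to the precise bookkeeping.
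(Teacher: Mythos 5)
The paper states this proposition without any proof --- it is followed only by a remark about BIBO stability --- so there is no in-paper argument to compare yours against; your sketch is effectively the only proof on the table. On its merits, your perturbation decomposition is the natural one and is consistent with the style of the proof of Theorem~\ref{th:local_stability}: write the delayed control as nominal plus mismatch, bound the mismatch by Lipschitz continuity of \(\alpha_i,\beta_i^{-1},\pi_i\), and feed the result through \(\nabla V_i^\top g_i\). Two small bookkeeping points: the hypothesis bounds the one-step belief increment \(\|b_i(t+1)-b_i(t)\|\), not the mismatch \(\|\hat b_i-b_i\|\) between the delayed and current beliefs, so for a multi-step delay you need to telescope and absorb a factor proportional to the number of delay steps into \(\eta\); and you should say explicitly where the \(\kappa_2\)-degradation comes from (a Lipschitz bound on the policy-gradient estimate as a function of the delayed belief), since the proposition claims degradation of both coefficients.

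More importantly, you have correctly put your finger on the real issue: under the literal hypothesis \(\|e_i(t)\|\leq\varepsilon_c\) with \(\varepsilon_c\) an \emph{absolute} constant, the perturbation contributes terms of the form \(c\,\varepsilon_c\|X-X^*\|\), and no amount of rearrangement turns that into the purely multiplicative degradation \(-(\kappa_1-\delta_1\varepsilon_c)\|X-X^*\|^2\) stated in the proposition; Young's inequality leaves an additive \(O(\varepsilon_c^2)\) residual, so the honest conclusion is ultimate boundedness of trajectories in an \(O(\varepsilon_c)\) ball (which is, in fact, exactly what the paper's own follow-up remark about BIBO stability asserts). Your first resolution --- using \(e_i(t)=\int_{t-\tau_c}^{t}\dot x_i(s)\,ds\) and \(\|\dot x_i\|\leq C\|x_i-x_i^*\|\) near the equilibrium to make the error bound \emph{relative} to the state deviation --- is the only route to the inequality as literally written, and it quietly strengthens the hypothesis. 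You should state that reinterpretation as an explicit assumption rather than a fallback; with it, the argument closes, and without it the proposition should be weakened to a practical-stability statement.
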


\begin{remark}
This result guarantees bounded-input, bounded-output (BIBO) stability even under time-delayed inter-vehicle communication and noisy observations, demonstrating MARL-CC’s robustness to real-world CAV networking constraints.
\end{remark}

\subsubsection{Asymptotic Convergence Rate}

Assuming smoothness of the value function \(Q_i\) and bounded variance of stochastic gradients, the expected suboptimality of MARL-CC satisfies:
\begin{equation}
\mathbb{E}[J(\Theta^*) - J(\Theta_k)] \leq \mathcal{O}\left( \frac{1}{\sqrt{k}} \right),
\end{equation}
which is consistent with first-order stochastic optimization convergence rates for nonconvex objectives.

\subsubsection{Summary of Theoretical Guarantees}

\begin{itemize}
    \item \textbf{Lyapunov stability:} Local asymptotic stability of closed-loop nonlinear dynamics.
    \item \textbf{Almost-sure convergence:} Two-timescale stochastic approximation guarantees policy and critic convergence.
    \item \textbf{Robustness:} Stable under bounded delays, sensor noise, and partial observability.
    \item \textbf{Guaranteed performance:} Sublinear convergence rate with monotonic value improvement.
\end{itemize}

The above theoretical properties firmly establish the MARL-CC algorithm as a stable and convergent framework for cooperative control in nonlinear, uncertain, and distributed CAV networks.

\subsection{Partial Observability Handling: Probabilistic Inference}
\label{subsec:partial_observability}

In real-world Connected Autonomous Vehicle (CAV) environments, each agent (vehicle) operates under \emph{partial observability} due to limited sensing range, communication delays, and occlusions caused by dynamic obstacles or environmental structures. This section presents the probabilistic inference layer of the proposed MARL-CC framework, designed to reconstruct the hidden global state and facilitate decentralized decision-making under uncertainty \cite{taghavi2025latent, Zhang2025BeliefMARL, Wang2026POMDPCAV}.

\subsubsection{Problem Formulation under Partial Observability}

Each autonomous vehicle $i \in \mathcal{N}$ perceives the environment through a noisy local observation $o_i(t) \in \mathcal{O}_i$ at time $t$, which is a probabilistic function of the latent global state $s(t) \in \mathcal{S}$. The observation model is given by:
\begin{equation}
    P(o_i(t) \mid s(t)) = \mathcal{N}(h_i(s(t)), \Sigma_i),
\end{equation}
where $h_i(\cdot)$ denotes the nonlinear sensor mapping and $\Sigma_i$ represents the covariance matrix of sensor noise \cite{Wang2026POMDPCAV}.

The decision-making process is modeled as a \emph{Partially Observable Markov Decision Process} (POMDP) defined by the tuple
\[
\mathcal{P}_i = (\mathcal{S}, \mathcal{A}_i, \mathcal{O}_i, P, R_i, \gamma),
\]
where $P(s' \mid s, a)$ is the transition probability, $R_i(s, a_i)$ the local reward, and $\gamma \in (0, 1)$ the discount factor.

\subsubsection{Belief State Representation}

Since the true state $s(t)$ is unobservable, each agent maintains a \textit{belief state} $b_i(t)$ — a probability distribution over $\mathcal{S}$ representing its knowledge of the environment:
\begin{equation}
    b_i(t)(s) = P(s(t) = s \mid o_{1:i}(0:t), a_{1:i}(0:t-1)).
\end{equation}
The belief update follows a recursive Bayesian filter:
\begin{equation}
    b_i(t+1)(s') = \eta \, P(o_i(t+1) \mid s') \sum_{s \in \mathcal{S}} P(s' \mid s, a_i(t)) \, b_i(t)(s),
    \label{eq:belief_update}
\end{equation}
where $\eta$ is a normalization constant ensuring $\sum_{s'} b_i(t+1)(s') = 1$ \cite{Zhang2025BeliefMARL}.

\begin{definition}[Joint Belief State]
The collective information structure of the connected system can be represented as a joint belief distribution:
\begin{equation}
    \mathcal{B}(t) = \bigotimes_{i=1}^{N} b_i(t),
\end{equation}
where $\bigotimes$ denotes the tensor product of individual beliefs, capturing both epistemic uncertainty and communication-induced correlations.
\end{definition}

\subsubsection{Probabilistic Inference via Variational Bayes}

To mitigate computational intractability in high-dimensional belief spaces, MARL-CC employs a \emph{Variational Inference} (VI) approach. Each agent approximates its belief using a tractable family $q_\phi(s)$ parameterized by $\phi$:
\begin{equation}
    q_\phi(s) = \arg \min_{q \in \mathcal{Q}} D_{\mathrm{KL}}(q(s) \parallel P(s \mid o_i)),
\end{equation}
where $D_{\mathrm{KL}}$ denotes the Kullback–Leibler divergence. The corresponding Evidence Lower Bound (ELBO) objective is:
\begin{equation}
    \mathcal{L}_{\text{ELBO}} = \mathbb{E}_{q_\phi(s)}[\log P(o_i \mid s)] - D_{\mathrm{KL}}(q_\phi(s) \parallel P(s)).
\end{equation}
Optimization of $\mathcal{L}_{\text{ELBO}}$ allows each agent to efficiently infer hidden state distributions while adapting online to nonstationary environments \cite{Li2025VariationalCAV}.

\subsubsection{Distributed Belief Fusion via Communication Graphs}

Agents periodically exchange probabilistic summaries of their local beliefs over a dynamic communication network $\mathcal{G} = (\mathcal{V}, \mathcal{E})$. For agents $i$ and $j$ connected by an edge $(i, j) \in \mathcal{E}$, belief fusion follows a consensus-based update:
\begin{equation}
    b_i^{\text{new}}(t) = \frac{1}{Z_i} \, b_i(t) \prod_{j \in \mathcal{N}_i} [b_j(t)]^{w_{ij}},
\end{equation}
where $\mathcal{N}_i$ denotes the set of neighbors of agent $i$, $w_{ij}$ are normalized communication weights satisfying $\sum_{j \in \mathcal{N}_i} w_{ij}=1$, and $Z_i$ is a normalization constant.

\begin{proposition}[Consensus Convergence]
If the communication graph $\mathcal{G}$ is connected and $w_{ij}$ form a doubly stochastic matrix, then the distributed belief updates converge to a common posterior distribution:
\begin{equation}
    \lim_{t \to \infty} \| b_i(t) - b_j(t) \| = 0, \quad \forall i,j \in \mathcal{V}.
\end{equation}
\end{proposition}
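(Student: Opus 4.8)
The plan is to linearize the multiplicative (geometric-mean) fusion rule by passing to logarithmic coordinates, where it becomes an ordinary linear averaging iteration, and then to invoke standard consensus theory for doubly-stochastic matrices. Write the belief of agent $i$ as a vector over the (common, finite) state set $\mathcal S$ and set $y_i(t) = \log b_i(t)$ componentwise; this is well defined once we assume the beliefs are strictly positive on a shared support. Taking logarithms of the fusion update $b_i^{\mathrm{new}}(t) = \frac{1}{Z_i}\, b_i(t)\prod_{j\in\mathcal N_i}[b_j(t)]^{w_{ij}}$ turns the product into the weighted sum $y_i^{\mathrm{new}} = \sum_j W_{ij}\, y_j - (\log Z_i)\mathbf 1$, where $W=(W_{ij})$ is the doubly-stochastic weight matrix acting on the agent index and $\mathbf 1$ is the all-ones vector over states. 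The essential observation is that the per-agent normalizer $Z_i$ enters only through a multiple of $\mathbf 1$, i.e.\ it lives exactly in the subspace that the softmax map quotients out.

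Second, I would remove the normalization nuisance by working with centered log-ratio coordinates $\bar y_i = y_i - \frac{1}{|\mathcal S|}(\mathbf 1^\top y_i)\mathbf 1$, which project out the all-ones direction over states. Because centering is linear and commutes with the agent-wise averaging by $W$, the normalization term is annihilated and the dynamics reduce to the clean linear consensus iteration $\bar y(t+1) = W\,\bar y(t)$. This is precisely the classical discrete-time consensus protocol, now to be analyzed spectrally.

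Third, I would analyze $W$. Double stochasticity makes $\mathbf 1$ both a left and right eigenvector with eigenvalue $1$; connectedness of $\mathcal G$ makes $W$ irreducible; and a strictly positive self-weight (present through the $b_i(t)$ factor) makes $W$ aperiodic, hence primitive. By the Perron--Frobenius theorem the eigenvalue $1$ is simple and all remaining eigenvalues satisfy $|\lambda_k|<1$, so $W^t \to \frac1N \mathbf 1\mathbf 1^\top$ and every $\bar y_i(t)$ converges to the common average $\frac1N\sum_j \bar y_j(0)$ at the geometric rate $|\lambda_2(W)|$. In particular $\|\bar y_i(t)-\bar y_j(t)\|\to 0$, and since the inverse centered-log map (the softmax) is continuous and sends agreeing centered-log vectors to identical probability distributions, this transfers back to $\|b_i(t)-b_j(t)\|\to 0$, which is the claim.

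The main obstacle is the interface between the multiplicative, normalized structure of the update and the additive linear theory: one must verify that the distinct normalizers $Z_i$ genuinely decouple from the consensus dynamics rather than injecting agent-dependent drift, and that taking logarithms is legitimate, which forces a strict-positivity (shared support) hypothesis that I would state explicitly. A cleaner but more advanced alternative that sidesteps the logarithm entirely is to show the fusion map is a strict contraction in the Hilbert projective metric on the cone of positive measures (Birkhoff--Hopf), with contraction ratio controlled by the connectivity of $\mathcal G$; this yields exponential consensus directly and makes the rate explicit without linearization.
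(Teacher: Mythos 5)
The paper itself offers no proof of this proposition --- it is stated and immediately followed by a citation --- so your proposal is supplying an argument where none exists, and the log-linearization route you take (pass to logarithmic coordinates, center out the normalizer, reduce to $\bar y(t+1)=W\bar y(t)$, and invoke Perron--Frobenius for a primitive doubly stochastic $W$) is the standard and correct way to prove consensus of logarithmic opinion pooling. Your observations that the per-agent normalizer $Z_i$ lives entirely in the all-ones direction over states, that centering commutes with agent-wise averaging, and that strict positivity on a common support is a hypothesis that must be added, are all exactly right, and the Birkhoff--Hopf alternative you sketch is a legitimate shortcut.

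The one genuine gap is your treatment of the self-factor. The fusion rule as written is $b_i^{\mathrm{new}} \propto b_i(t)\prod_{j\in\mathcal N_i}[b_j(t)]^{w_{ij}}$ with $\sum_{j\in\mathcal N_i}w_{ij}=1$, so taken literally the exponent matrix in log-coordinates is $I+W$, whose rows sum to $2$; it is \emph{not} the doubly stochastic matrix your third step analyzes. You cannot simultaneously claim that the $b_i(t)$ prefactor supplies the positive diagonal needed for aperiodicity and that the resulting matrix is stochastic. This matters: under the literal $I+W$ iteration the agent-consensus component of $\bar y$ doubles each step while the disagreement scales by $1+\lambda_k$, which can exceed $1$, so the beliefs either collapse degenerately onto a common Dirac mass (when the averaged centered log-belief has a unique argmax) or, in symmetric edge cases, fail to reach consensus at all --- the clean Perron--Frobenius conclusion does not apply as you state it. The fix is to make explicit the charitable reading under which your proof is valid: interpret $w_{ii}>0$ as the diagonal entry of the doubly stochastic matrix and the update as the genuine weighted geometric mean $b_i^{\mathrm{new}}\propto\prod_{j\in\mathcal N_i\cup\{i\}}[b_j(t)]^{w_{ij}}$ with $\sum_j w_{ij}=1$ including $j=i$. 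With that reading stated as a hypothesis (together with finiteness of $\mathcal S$, shared strictly positive support, and the absence of new observations during the fusion iterations), your argument closes.
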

This distributed inference mechanism builds on recent advances in consensus-based Bayesian estimation for vehicular networks \cite{Patel2026ConsensusInference}.

\subsubsection{Belief-Aware Policy Optimization}

The decentralized policy $\pi_i(a_i \mid b_i)$ is optimized using belief-conditioned value functions:
\begin{equation}
    V_i^{\pi}(b_i) = \mathbb{E}_{s \sim b_i, a_i \sim \pi_i}[R_i(s, a_i) + \gamma \, V_i^{\pi}(b_i')].
\end{equation}
This transforms the original POMDP into a \emph{belief-MDP}, enabling classical policy gradient updates while maintaining robustness against observation noise and latency \cite{Wang2026POMDPCAV}.

\begin{theorem}[Bounded Suboptimality under Partial Observability]
Let $\pi^*$ be the optimal policy under full observability, and $\pi_b^*$ be the optimal policy derived from belief states. Then, for bounded observation noise $\|\Sigma_i\| \leq \epsilon$, there exists a constant $C > 0$ such that
\begin{equation}
    | V^{\pi^*}(s_0) - V^{\pi_b^*}(b_0) | \leq C \epsilon.
\end{equation}
\end{theorem}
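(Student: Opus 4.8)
The plan is to read the theorem as a perturbation bound: as the observation covariance $\|\Sigma_i\|$ shrinks, the belief-MDP of Section~\ref{subsec:partial_observability} degenerates to the fully observable MDP, and the optimality gap should vanish linearly in $\epsilon$. Three ingredients drive the argument: (i) a \emph{belief-concentration} estimate controlling how far the filtered belief $b_i(t)$ sits from the true state $s(t)$; (ii) a \emph{regularity} estimate ($C^2$ smoothness of the optimal value function) that converts belief error into value error; and (iii) the discount factor $\gamma<1$, which keeps the accumulated per-step error summable through the geometric sum $\sum_t\gamma^t=1/(1-\gamma)$.

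First I would quantify belief concentration in mean square. Using the Gaussian observation model $P(o_i\mid s)=\mathcal{N}(h_i(s),\Sigma_i)$ together with the recursive Bayes filter of Eq.~\eqref{eq:belief_update}, and assuming $h_i$ is injective with Lipschitz inverse and that the pair $(h_i,P)$ satisfies a uniform observability (filter-stability) condition, I would show the posterior covariance contracts to a fixed point proportional to $\Sigma_i$, yielding a uniform-in-time bound
\begin{equation}
\mathbb{E}\big[\,\|\bar s_i(t)-s(t)\|^2\,\big]\;\le\; c_1\,\epsilon,\qquad \forall t,
\end{equation}
where $\bar s_i(t)=\mathbb{E}_{s\sim b_i(t)}[s]$ is the posterior mean and the estimator is (asymptotically) unbiased. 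It is crucial here that the relevant quantity is the variance scale $O(\epsilon)$ rather than the standard-deviation scale $O(\sqrt{\epsilon})$: the linear-in-$\epsilon$ rate in the theorem is obtained precisely because the zero-mean noise makes first-order deviations cancel in expectation, leaving only the second-order (variance) contribution.

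Next I would establish value regularity and assemble the bound. Under a Lipschitz and twice-differentiable reward $R_i$ with bounded Hessian and a smooth transition kernel, the standard contraction of the Bellman optimality operator propagates these properties to $V^{\pi^*}$, giving a schematic uniform bound $\|\nabla^2 V^{\pi^*}\|_\infty\le L_2/(1-\gamma)$. A second-order Taylor expansion of $V^{\pi^*}$ about $\bar s_i(t)$ then gives, after taking expectation over the observation noise,
\begin{equation}
\big|\,\mathbb{E}_{s\sim b_i(t)}[V^{\pi^*}(s)]-V^{\pi^*}(s(t))\,\big|\;\le\; \tfrac{1}{2}\|\nabla^2 V^{\pi^*}\|_\infty\,\mathbb{E}\big[\|\bar s_i(t)-s(t)\|^2\big]\;\le\; c_2\,\epsilon .
\end{equation}
I would then introduce a \emph{certainty-equivalent} belief policy $\tilde\pi_b$ that executes $\pi^*$ evaluated at the belief mean, apply the performance-difference lemma to express $V^{\pi^*}(s_0)-V^{\tilde\pi_b}(b_0)$ as a discounted sum of per-step mismatches each of order $c_2\epsilon$, and use optimality of $\pi_b^*$ (so $V^{\pi_b^*}(b_0)\ge V^{\tilde\pi_b}(b_0)$) together with the fact that belief policies cannot exceed full-observability performance to sandwich the gap. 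Summing the geometric series yields the claim with $C=c_2/(1-\gamma)$.

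The hard part will be the uniform-in-time belief-concentration estimate. Because both the sensor map $h_i$ and the dynamics are nonlinear, Bayesian-filter errors can in principle amplify rather than decay, so the $O(\epsilon)$ mean-square bound hinges on a filter-stability / exponential-forgetting assumption or an equivalent uniform observability condition on $(h_i,P)$; absent such a condition the constant $c_1$ could grow with the horizon and the linear rate would degrade. A secondary subtlety, already flagged above, is justifying the linear rather than square-root dependence on $\epsilon$, which relies jointly on the zero-mean Gaussian structure and twice-differentiability of the value function so that the first-order estimation error averages out.
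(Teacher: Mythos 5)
The paper does not actually prove this theorem: it is stated bare, followed only by one sentence of interpretation and citations, so there is no in-paper proof to compare against. Your sketch therefore does more work than the source, and the route you choose — uniform-in-time filter concentration, smoothness of the optimal value function, a certainty-equivalent intermediate policy, the performance-difference lemma, and a sandwich using optimality of $\pi_b^*$ on one side and the information advantage of full observability on the other — is the natural and essentially standard way to make such a claim rigorous. You also correctly identify the two points on which the linear (rather than $\sqrt{\epsilon}$) rate actually hinges: filter stability and second-order cancellation of the estimation error.

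Two places need tightening before this becomes a proof. First, your displayed Taylor bound conflates two distinct quantities: the spread of the belief about its own mean (controlled by the posterior covariance, which is what the second-order expansion about $\bar s_i(t)$ bounds) and the mean-squared error $\mathbb{E}\big[\|\bar s_i(t)-s(t)\|^2\big]$ of the posterior mean as a point estimate of the true state. Moreover, as written the inequality would have to hold pathwise in the observation noise, and pathwise the first-order term $\nabla V^{\pi^*}(s(t))^\top(\bar s_i(t)-s(t))$ is present and is only $O(\sqrt{\epsilon})$; the cancellation you invoke requires the expectation over the noise to sit outside the absolute value, and even then the posterior mean is generally a \emph{biased} estimator of the true state when $h_i$ is nonlinear (the bias is $O(\Sigma_i)$ under regularity, so the linear rate survives, but this must be argued rather than asserted via unbiasedness). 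Second, $C^2$ regularity of $V^{\pi^*}$ is itself a substantive assumption: the optimal value function is a pointwise maximum over actions and is generically only Lipschitz, so the bound $\|\nabla^2 V^{\pi^*}\|_\infty \le L_2/(1-\gamma)$ requires conditions (e.g., unique, smoothly varying maximizers) that should be stated explicitly; without it the argument degrades to an $O(\sqrt{\epsilon})$ bound via Lipschitz continuity alone. With those assumptions added and the expectation bookkeeping corrected, your argument closes the gap the paper leaves open.
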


This result guarantees that the degradation in policy performance due to partial observability is asymptotically bounded, affirming the efficacy of the proposed probabilistic inference framework in MARL-CC \cite{Zhang2025BeliefMARL, Li2025VariationalCAV, Patel2026ConsensusInference, Wang2026POMDPCAV}.

\subsubsection{Illustrative Example: Intersection Coordination}

Consider three connected autonomous vehicles approaching a signal-free intersection. Each vehicle’s camera provides partial views of others due to occlusion. Through the proposed variational belief inference, agents share posterior beliefs on others' intentions, resulting in emergent coordination behaviors such as yielding and synchronized acceleration, even without explicit centralized control \cite{Li2025VariationalCAV, Wang2026POMDPCAV}.

\subsection{Credit Assignment Mechanism: Shapley-Based Cooperative Reward Allocation}
\label{subsec:credit_assignment}

One of the central challenges in multi-agent reinforcement learning for Connected Autonomous Vehicles (CAVs) is the \emph{credit assignment problem}, wherein the contribution of individual agents to the global reward signal is obscured by complex, nonlinear interdependencies among agents’ actions. This section develops a rigorous mathematical formulation based on cooperative game theory, employing the \emph{Shapley value} to achieve fair, interpretable, and convergence-stable reward distribution among autonomous vehicles \cite{Kim2025ShapleyMARL, Rao2025CoopAV, Sun2026FairnessMARL}.

\subsubsection{Problem Definition}

Let $\mathcal{N} = \{1, 2, \ldots, N\}$ denote the set of autonomous vehicles.  
The global cooperative reward function $R : 2^{\mathcal{N}} \rightarrow \mathbb{R}$ quantifies the collective performance of any coalition $S \subseteq \mathcal{N}$:
\begin{equation}
    R(S) = \mathbb{E}\Big[\sum_{t=0}^{T} \gamma^{t} r_S(t)\Big],
\end{equation}
where $r_S(t)$ represents the instantaneous cumulative reward generated by agents in coalition $S$ at time $t$, and $\gamma \in (0,1)$ is the discount factor.

The objective is to determine individual rewards $\phi_i$ such that:
\[
R(\mathcal{N}) = \sum_{i \in \mathcal{N}} \phi_i,
\]
while ensuring fairness, efficiency, and monotonicity in contribution allocation.

\subsubsection{Shapley Value Formulation}

The Shapley value provides a unique payoff allocation satisfying the axioms of \emph{symmetry}, \emph{linearity}, and \emph{marginal contribution fairness}. For each agent $i$, its Shapley-based reward $\phi_i$ is defined as:
\begin{equation}
\phi_i = \sum_{S \subseteq \mathcal{N} \setminus \{i\}} 
\frac{|S|! \, (N - |S| - 1)!}{N!} 
\big[ R(S \cup \{i\}) - R(S) \big].
\label{eq:shapley_value}
\end{equation}
Here, $R(S \cup \{i\}) - R(S)$ represents the marginal gain in global reward when agent $i$ joins coalition $S$.  
This decomposition ensures that the total global reward is exactly distributed among agents:
\[
\sum_{i \in \mathcal{N}} \phi_i = R(\mathcal{N}).
\]

\begin{definition}[Shapley-Weighted Return]
The expected discounted return for agent $i$ under policy $\pi_i$ is defined as
\begin{equation}
    J_i^{\pi_i} = \mathbb{E}_{b_i, a_i \sim \pi_i} \Bigg[ 
    \sum_{t=0}^{T} \gamma^t \, \phi_i(t) 
    \Bigg],
\end{equation}
where $\phi_i(t)$ is computed from Eq.~\eqref{eq:shapley_value} at each time step $t$.
\end{definition}

\subsubsection{Approximate Shapley Computation via Monte Carlo Sampling}

Since computing the exact Shapley value scales exponentially ($O(2^N)$), MARL-CC employs a stochastic approximation based on Monte Carlo coalition sampling \cite{Sun2026FairnessMARL}:
\begin{equation}
    \hat{\phi}_i = \frac{1}{M} \sum_{m=1}^{M} 
    \big[ R(\mathcal{P}_m^i \cup \{i\}) - R(\mathcal{P}_m^i) \big],
\end{equation}
where $\mathcal{P}_m^i$ is the randomly sampled coalition preceding agent $i$ in the $m$-th permutation and $M$ is the number of sampled permutations. This yields an unbiased estimator:
\[
    \mathbb{E}[\hat{\phi}_i] = \phi_i.
\]
This stochastic formulation allows tractable estimation of fair credit assignment even in large-scale CAV networks.

\begin{proposition}[Unbiasedness of the Monte Carlo Shapley Estimator]
Let $\mathcal{P}$ denote the uniform distribution over all permutations of $\mathcal{N}$. Then:
\[
    \mathbb{E}_{\mathcal{P}}[\hat{\phi}_i] = \phi_i.
\]
\end{proposition}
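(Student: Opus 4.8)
The plan is to reduce the claim to a single-permutation computation and then to recognize the permutation-sampling probabilities as precisely the Shapley weights appearing in Eq.~\eqref{eq:shapley_value}. Since the $M$ permutations entering the estimator $\hat{\phi}_i$ are drawn independently and identically from the uniform distribution $\mathcal{P}$ over the $N!$ orderings of $\mathcal{N}$, linearity of expectation immediately gives $\mathbb{E}_{\mathcal{P}}[\hat{\phi}_i] = \mathbb{E}_{\sigma \sim \mathcal{P}}[R(\mathcal{P}^i \cup \{i\}) - R(\mathcal{P}^i)]$, where $\mathcal{P}^i$ denotes the set of agents preceding $i$ in a single random permutation $\sigma$. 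Thus the number of samples $M$ plays no essential role, and it suffices to analyze the expected marginal contribution under one uniform ordering.

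First I would condition on the value of the predecessor set. For a fixed coalition $S \subseteq \mathcal{N} \setminus \{i\}$, the event $\{\mathcal{P}^i = S\}$ occurs exactly when every member of $S$ appears before $i$ and every member of $\mathcal{N} \setminus (S \cup \{i\})$ appears after $i$. Since these events are disjoint and exhaust the sample space as $S$ ranges over subsets of $\mathcal{N} \setminus \{i\}$, I can rewrite the expectation as
\[
\mathbb{E}_{\sigma \sim \mathcal{P}}\big[R(\mathcal{P}^i \cup \{i\}) - R(\mathcal{P}^i)\big] = \sum_{S \subseteq \mathcal{N} \setminus \{i\}} \Pr[\mathcal{P}^i = S]\,\big[R(S \cup \{i\}) - R(S)\big].
\]
Comparing this expression term-by-term with the definition of $\phi_i$ reduces the entire proposition to the single combinatorial identity $\Pr[\mathcal{P}^i = S] = |S|!\,(N-|S|-1)!/N!$.

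The crux of the argument, and the only step requiring genuine care, is this combinatorial count. I would fix $|S| = k$ and enumerate the permutations consistent with $\mathcal{P}^i = S$: the $k$ elements of $S$ may be arranged among themselves in $k!$ ways in the positions before $i$, the position of $i$ is thereby determined, and the remaining $N - k - 1$ agents may be arranged in $(N-k-1)!$ ways in the positions after $i$. Dividing the resulting count $k!\,(N-k-1)!$ by the total $N!$ yields exactly the Shapley weight. Substituting this back and invoking linearity over the $M$ i.i.d.\ draws completes the proof. I anticipate no analytic obstacle here; the sole subtlety is to verify that the events $\{\mathcal{P}^i = S\}$ genuinely partition the permutation sample space, so that the conditioning step and the term-by-term matching against Eq.~\eqref{eq:shapley_value} are both justified.
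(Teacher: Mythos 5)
Your proof is correct and complete; the paper itself states this proposition without any accompanying proof, so there is no in-paper argument to compare against, but your route --- reduce to a single uniform permutation by linearity over the $M$ i.i.d.\ draws, partition the sample space by the predecessor set $\{\mathcal{P}^i = S\}$, and verify the count $\Pr[\mathcal{P}^i = S] = |S|!\,(N-|S|-1)!/N!$ against the weights in Eq.~\eqref{eq:shapley_value} --- is the standard and essentially unique natural argument, and every step (including the combinatorial count and the partition claim) checks out.
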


\subsubsection{Integration into MARL Policy Update}

In the MARL-CC algorithm (Algorithm~\ref{alg:marl_cc}), the Shapley-based reward $\phi_i$ replaces the standard global or local reward in the policy gradient and Q-function updates:
\begin{align}
    \omega_i &\gets \omega_i + \alpha 
    \Big( \phi_i + \gamma \max_{u_i'} Q_i(b_i', u_i'; \omega_i) - Q_i(b_i, u_i; \omega_i) \Big)
    \nabla_{\omega_i} Q_i, \\
    \theta_i &\gets \theta_i + \beta 
    \nabla_{\theta_i} \mathbb{E}_{u_i \sim \pi_i}[Q_i(b_i, u_i; \omega_i)].
\end{align}

This integration allows policy updates to reflect each vehicle’s true marginal contribution, stabilizing gradient dynamics and reducing variance in reward signals.

\begin{theorem}[Convergence of Shapley-Weighted Policy Gradients]
Assume each $Q_i(b_i, u_i; \omega_i)$ is Lipschitz continuous and bounded, and the learning rates $\alpha, \beta$ satisfy the Robbins–Monro conditions. Then, the Shapley-weighted policy gradient updates converge almost surely to a local Nash equilibrium of the cooperative game:
\[
    \lim_{t \to \infty} \nabla_{\theta_i} J_i^{\pi_i} = 0, \quad \forall i \in \mathcal{N}.
\]
\end{theorem}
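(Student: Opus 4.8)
The plan is to recast the claim as a two-timescale stochastic approximation result, reusing the machinery already assembled for Theorem~\ref{th:convergence} but with the per-agent Shapley credit $\phi_i$ in place of the raw reward. The critic parameters $\omega_i$ live on the fast timescale (step size $\alpha_k$) and the actor parameters $\theta_i$ on the slow one (step size $\beta_k$); the Robbins--Monro conditions furnish the timescale separation that lets the actor perceive an essentially converged critic while the critic perceives a quasi-static policy. My first task is to verify the standing regularity hypotheses demanded by the ODE method: boundedness and Lipschitz continuity of each $Q_i$ give uniformly bounded critic targets and gradients, and the unbiasedness of the Monte-Carlo Shapley estimator established in the preceding proposition, $\mathbb{E}_{\mathcal{P}}[\hat\phi_i]=\phi_i$, lets me decompose each update into a deterministic mean field plus a square-integrable martingale-difference term with bounded conditional variance.

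On the fast timescale, freezing $\Theta$, I would recognize the critic recursion as a stochastic approximation of the Bellman operator built from the Shapley reward. Since $\gamma<1$ this operator is a $\gamma$-contraction in the supremum norm, so the standard contraction-mapping argument gives $\omega_i^{(k)}\to\omega_i^{\star}(\Theta)$ almost surely for the current policy. The genuinely new content lies in the slow timescale, where, with the critic at its fixed point, the actor iterates track the mean ODE $\dot\theta_i=\nabla_{\theta_i}J_i^{\pi_i}(\Theta)$ and the goal is to show the trajectories settle on the stationary set $\{\nabla_{\theta_i}J_i^{\pi_i}=0,\ \forall i\}$ rather than merely remaining bounded.

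The main obstacle is precisely here: simultaneous gradient ascent by several self-interested agents generically admits limit cycles and need not converge, so boundedness alone does not yield a fixed point. What I expect to resolve this is the efficiency axiom of the Shapley value, $\sum_{i}\phi_i=R(\mathcal{N})$, which upon summing the per-step credits over the horizon gives $\sum_i J_i^{\pi_i}=J(\Theta)=\mathbb{E}[\sum_t\gamma^t R(\mathcal{N})]$. This identity makes the global return a candidate common potential for the joint dynamics. The delicate step --- and the crux of the whole argument --- is to transfer this summed identity into a per-coordinate gradient alignment, namely to show that differentiating agent $i$'s Shapley share with respect to its \emph{own} parameters recovers (up to positive scaling) the $\theta_i$-block of $\nabla J(\Theta)$; this is plausible because each $\phi_i$ encodes $i$'s marginal contribution, so under the cooperative coupling the individual ascent direction is consistent with the global one. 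Once this (pseudo)potential structure is secured, the dynamics become those of a potential game, limit cycles are excluded, and $J$ serves as a Lyapunov function along the flow.

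With the potential established, I would invoke the Kushner--Clark lemma to conclude that the only stable attractors of the mean ODE are the critical points of the potential, i.e.\ the joint stationary set on which $\nabla_{\theta_i}J_i^{\pi_i}=0$ for every $i$; this is exactly the first-order characterization of a local Nash equilibrium of the cooperative game. Almost-sure convergence of the coupled iterates to this set then follows from Borkar's two-timescale theorem, given the noise, boundedness, and step-size conditions verified above. I would close by noting that the efficiency axiom is doing the decisive work: it is what converts an a~priori non-convergent multi-agent gradient process into one governed by a single scalar potential, thereby turning the generic instability of competitive learning into guaranteed stationarity.
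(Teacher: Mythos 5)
Your two-timescale skeleton --- fast critic converging to the Bellman fixed point by $\gamma$-contraction, slow actor tracking the mean ODE via the Kushner--Clark/Borkar machinery --- is exactly the route the paper takes; in fact the paper supplies no dedicated proof for this theorem and implicitly leans on the same sketch it gives for Theorem~\ref{th:convergence}. You go further than the paper by naming the real obstacle (simultaneous gradient ascent by $N$ coupled agents need not converge) and by proposing to close it with a potential-game argument built on the Shapley efficiency axiom. That instinct is good, but the step you yourself flag as ``delicate'' is where the argument breaks. Efficiency yields only the summed identity $\sum_j J_j^{\pi_j}=J(\Theta)$, hence $\sum_j \nabla_{\theta_i}J_j^{\pi_j}=\nabla_{\theta_i}J(\Theta)$ for each $i$. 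For $J$ to serve as an exact potential for the joint dynamics you need the single term $\nabla_{\theta_i}J_i^{\pi_i}$ to equal, or be positively proportional to, $\nabla_{\theta_i}J(\Theta)$, i.e.\ you need the cross-terms $\sum_{j\neq i}\nabla_{\theta_i}J_j^{\pi_j}$ to vanish. They do not: agent $j$'s Shapley share is assembled from marginal contributions $R(S\cup\{j\})-R(S)$ over coalitions $S$ that contain $i$, so $\phi_j$ depends on $\theta_i$ and its $\theta_i$-gradient is generically nonzero. The efficiency axiom therefore does not convert simultaneous Shapley-gradient ascent into gradient flow on a single scalar potential, and limit cycles are not excluded by anything you (or the paper) have established.

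What survives without that step is weaker than the stated theorem: the fast-timescale critic convergence, the unbiasedness of the Monte-Carlo Shapley estimator, and the boundedness/martingale-noise verifications are fine, and the slow iterates do track the coupled mean ODE $\dot\theta_i=\nabla_{\theta_i}J_i^{\pi_i}(\Theta)$; but Kushner--Clark then only gives convergence to the internally chain-transitive invariant sets of that ODE, which need not be stationary points. To reach the stated conclusion you would need an additional structural hypothesis --- e.g.\ that the Shapley-reshaped game is a (weighted) potential game, a monotonicity or diagonal-dominance condition on the game Jacobian, or an explicit proof of the per-coordinate gradient alignment for the specific factorized coalition structure the paper uses --- none of which follows from Lipschitzness of $Q_i$ and Robbins--Monro step sizes alone. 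The paper's own sketch silently treats the multi-agent update as ``stochastic gradient ascent on a smooth expected reward surface'' and so has the same hole; your proposal deserves credit for exposing it, but the proposed fix does not go through as stated.
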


\subsubsection{Computational Optimization via Factorized Coalitions}

To mitigate computational overhead, MARL-CC adopts a \emph{factorized coalition structure} based on neighborhood dependencies:
\begin{equation}
    R(S) \approx \sum_{i \in S} R_i(\mathcal{N}_i),
\end{equation}
where $\mathcal{N}_i$ denotes the local neighborhood of agent $i$.  
This yields a distributed Shapley decomposition:
\begin{equation}
    \phi_i^{\text{local}} = \sum_{S \subseteq \mathcal{N}_i \setminus \{i\}}
    \frac{|S|! \, (|\mathcal{N}_i| - |S| - 1)!}{|\mathcal{N}_i|!}
    \big[R_i(S \cup \{i\}) - R_i(S)\big].
\end{equation}
Thus, agents compute marginal contributions based on locally observable outcomes, preserving fairness and interpretability while ensuring scalability to large vehicular networks \cite{Kim2025ShapleyMARL, Rao2025CoopAV}.

\subsubsection{Interpretability and System-Level Implications}

The Shapley-based credit assignment confers several advantages:
\begin{enumerate}
    \item \textbf{Fairness:} Agents contributing more to safety, energy efficiency, or traffic throughput receive proportionally higher rewards.
    \item \textbf{Convergence Stability:} Eliminates gradient interference between agents by ensuring orthogonalized reward signals.
    \item \textbf{Causal Attribution:} Each agent’s effect on global performance is quantifiably isolated, enabling transparent and auditable learning in safety-critical applications.
\end{enumerate}

\begin{corollary}[Shapley Consistency under Nonlinear Dynamics]
For any differentiable and bounded nonlinear control law $f_i(x_i, u_i)$, the Shapley-value decomposition maintains consistency with the continuous system reward function, i.e.,
\[
    \sum_{i \in \mathcal{N}} \frac{\partial \phi_i}{\partial u_i} 
    = \frac{\partial R}{\partial u}.
\]
\end{corollary}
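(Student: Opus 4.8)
The plan is to anchor the argument on the \emph{efficiency} axiom of the Shapley value, already established in the Theorem on Shapley Value Fairness, which yields the pointwise identity $\sum_{i\in\mathcal{N}}\phi_i = R(\mathcal{N})$ in the joint control variable $u=(u_1,\dots,u_N)$. The corollary then follows, in essence, by differentiating this identity; the real work lies in (i) justifying that every term is differentiable in $u$ and (ii) collapsing the resulting full gradient onto the diagonal sum that appears in the statement.

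First I would establish differentiability. The reward $R(S)$ is a functional of the closed-loop trajectories, which depend on the controls only through the dynamics $\dot{x}_i=f_i(x_i,u_i)$. Under the hypothesis that each $f_i$ is differentiable and bounded, boundedness guarantees existence and uniqueness of solutions on the finite horizon $[0,T]$, while differentiability together with a Gr\"onwall-type sensitivity estimate gives continuously differentiable dependence of each trajectory, hence of each coalition value $R(S)$, on $u$. Because the Shapley value in Eq.~\eqref{eq:shapley_value} is a \emph{fixed finite linear combination} of the coalition values $\{R(S)\}_{S\subseteq\mathcal{N}}$ with constant binomial weights, this differentiability transfers immediately to each $\phi_i$.

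Next I would differentiate the efficiency identity. By linearity of the derivative, $\sum_{i}\partial\phi_i/\partial u_j = \partial R(\mathcal{N})/\partial u_j$ for every $j$, and summing over the agents' own coordinates produces a \emph{full} double sum $\sum_i\sum_j \partial\phi_j/\partial u_i$. The target identity retains only the diagonal $\sum_i \partial\phi_i/\partial u_i$, so the crux is to show that the off-diagonal cross-sensitivities $\partial\phi_j/\partial u_i$ with $j\neq i$ do not contribute. Here I would invoke the factorized coalition structure $R(S)\approx\sum_{i\in S}R_i(\mathcal{N}_i)$ and the associated local decomposition $\phi_i^{\mathrm{local}}$: under neighborhood locality the marginal contribution of agent $i$ is governed by $u_i$, so that the marginal effect of $u_i$ on the reward is insensitive to the activation of distant agents, i.e. $\partial R(S\cup\{i\})/\partial u_i = \partial R(S)/\partial u_i$ up to the neighborhood coupling. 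Substituting this into the weighted marginal-contribution sum lets the telescoping binomial weights cancel the cross terms, leaving precisely $\sum_i \partial\phi_i/\partial u_i = \partial R/\partial u$.

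I expect the main obstacle to be exactly this last collapse: in a genuinely coupled, non-separable reward the off-diagonal cross-partials need not vanish, and a simple two-agent check shows that the claimed identity can fail unless the marginal effect of each control is independent of which other agents are active. The honest route is therefore to make the factorization/separability hypothesis explicit, justified physically by the short interaction range of the inter-vehicle coupling costs $\ell_{ij}$ and by the neighborhood-restricted Shapley decomposition already adopted, so that the induced additive game has each player's marginal contribution equal to its standalone value and the diagonal identity holds. I would state this locality assumption at the outset, after which the differentiation argument closes the proof.
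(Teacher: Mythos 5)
The paper states this corollary with no proof at all, so there is no official argument to measure your attempt against; what follows assesses your proposal on its own terms. Your skeleton --- invoke the efficiency axiom $\sum_{i}\phi_i = R(\mathcal{N})$, establish $C^1$ dependence of each coalition value $R(S)$ on $u$ via a Gr\"onwall-type sensitivity bound, and differentiate --- is the natural route and is almost certainly what the authors intend, given that the corollary sits immediately downstream of the Shapley fairness theorem and the factorized-coalition discussion. Your differentiability step is sound: each $\phi_i$ is a fixed finite linear combination of the coalition values with constant combinatorial weights, so smoothness transfers directly.

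More importantly, you have correctly diagnosed that the corollary does not follow from efficiency alone, and this is the substantive content of your review. Differentiating $\sum_i \phi_i = R$ gives $\sum_i \partial\phi_i/\partial u_j = \partial R/\partial u_j$ for every $j$, a statement about the full Jacobian, whereas the claimed identity keeps only the diagonal terms $\partial\phi_i/\partial u_i$; the two coincide only when the cross-sensitivities $\partial\phi_j/\partial u_i$ ($j\neq i$) vanish or cancel. Your suspicion that a two-agent example breaks the identity is right: take $R(\{1,2\})=u_1u_2$ with all smaller coalitions worth zero; then $\phi_1=\phi_2=\tfrac12 u_1u_2$, so $\sum_i\partial\phi_i/\partial u_i=\tfrac12(u_1+u_2)$ while the directional derivative $\sum_i\partial R/\partial u_i=u_1+u_2$. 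Your remedy --- promoting the factorization $R(S)\approx\sum_{i\in S}R_i(\mathcal{N}_i)$ to an explicit separability hypothesis so that the induced game is additive and each agent's Shapley share depends only on its own control --- is the honest way to make the statement true, though it converts the ``corollary'' into a conditional result under an assumption the paper leaves implicit. You could also flag that, as written, the two sides do not even have matching dimensions ($\sum_i\partial\phi_i/\partial u_i$ is an $m$-vector while $\partial R/\partial u$ is an $Nm$-vector), so any rigorous version must first fix an interpretation such as reading $\partial R/\partial u$ as $\sum_i\partial R/\partial u_i$. In short: your proof strategy is correct as far as it can go, and the gap you identify is a genuine defect of the stated result rather than of your argument.
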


\subsubsection{Illustrative Example: Cooperative Lane Merging}

In a multi-lane merging scenario, Shapley-based reward allocation ensures that vehicles facilitating smooth traffic flow or yielding appropriately obtain higher marginal rewards.  
This mitigates the selfish acceleration tendencies observed in traditional MARL frameworks and leads to emergent socially optimal behaviors — reduced congestion and improved fuel economy — confirming the operational value of the MARL-CC credit mechanism \cite{Sun2026FairnessMARL}.

\subsection{Theoretical Analysis}
\label{sec:theoretical_analysis}

The theoretical backbone of MARL-CC establishes its convergence guarantees, stability conditions, and optimality properties under nonlinear dynamics, partial observability, and distributed reward structures. This section formalizes the learning process as a stochastic approximation of a fixed-point operator and leverages Lyapunov stability theory and mean-field game analysis to ensure theoretical soundness.

\subsubsection{Problem Formalization}

We consider a stochastic dynamic game with $N$ agents, each represented by a connected autonomous vehicle (CAV) with state $x_i \in \mathcal{X}_i$, control input $u_i \in \mathcal{U}_i$, and observation $z_i \in \mathcal{Z}_i$. The global system dynamics follow:

\begin{equation}
\dot{x}_i = f_i(x_i, u_i) + \sum_{j \in \mathcal{N}_i} g_{ij}(x_i, x_j) + w_i(t),
\end{equation}

where $f_i$ represents nonlinear vehicle dynamics, $g_{ij}$ encodes inter-agent coupling, and $w_i(t)$ is a bounded stochastic disturbance. Each agent seeks to maximize its expected discounted return:

\begin{equation}
J_i = \mathbb{E} \left[ \sum_{t=0}^{\infty} \gamma^t \phi_i(t) \mid \pi_i, \pi_{-i} \right],
\end{equation}

where $\phi_i(t)$ denotes the Shapley-value-based reward defined in Eq.~\eqref{eq:shapley_value}, and $\pi_i$ is the policy parameterized by $\theta_i$.

\subsubsection{Fixed-Point Characterization and Convergence}

The value function of agent $i$ under belief state $b_i$ satisfies the Bellman fixed-point equation:

\begin{equation}
V_i(b_i) = \mathbb{E}_{u_i \sim \pi_i} \left[ \phi_i + \gamma \mathbb{E}_{b_i'} V_i(b_i') \right].
\end{equation}

Let $\mathcal{T}_i$ denote the Bellman operator for agent $i$:

\begin{equation}
(\mathcal{T}_i V_i)(b_i) = \max_{u_i} \mathbb{E} \left[ \phi_i + \gamma V_i(b_i') \mid b_i, u_i \right].
\end{equation}

\noindent
\textbf{Proposition 1 (Contraction Mapping):}  
If the reward $\phi_i$ and transition kernel are bounded, then $\mathcal{T}_i$ is a $\gamma$-contraction with respect to the supremum norm:
\[
\| \mathcal{T}_i V_i - \mathcal{T}_i V_i' \|_\infty \leq \gamma \| V_i - V_i' \|_\infty.
\]
Hence, by Banach’s fixed-point theorem, $V_i$ converges to a unique fixed point $V_i^*$.

\subsubsection{Lyapunov Stability of the Policy Updates}

We define a composite Lyapunov function for the joint system:
\begin{equation}
\mathcal{L}(\theta, \omega) = \sum_{i=1}^{N} \left[ \| \nabla_{\omega_i} Q_i(b_i, u_i; \omega_i) \|^2 + \| \nabla_{\theta_i} J_i(\theta_i) \|^2 \right].
\end{equation}

\noindent
\textbf{Theorem 1 (Asymptotic Stability of MARL-CC Updates):}  
Under bounded gradients, Lipschitz-continuous policy and value function approximators, and diminishing learning rates $\alpha_t, \beta_t$ satisfying  
$\sum_t \alpha_t = \infty$, $\sum_t \alpha_t^2 < \infty$,  
the policy and Q-function parameter updates asymptotically converge to a locally stable equilibrium:
\[
(\theta_i, \omega_i) \to (\theta_i^*, \omega_i^*), \quad \forall i \in \mathcal{N}.
\]

\noindent
\textit{Proof Sketch.} The joint update process can be viewed as a two-timescale stochastic approximation:
\[
\omega_{i,t+1} = \omega_{i,t} + \alpha_t \Delta_{\omega_i,t}, \quad
\theta_{i,t+1} = \theta_{i,t} + \beta_t \Delta_{\theta_i,t},
\]
where $\Delta_{\omega_i,t}$ and $\Delta_{\theta_i,t}$ correspond to TD and policy gradient steps, respectively. The fast-timescale $\omega$ dynamics converge to a quasi-stationary point, while the slower $\theta$ dynamics converge along the negative gradient of $\mathcal{L}(\theta, \omega)$, ensuring asymptotic stability.

\subsubsection{Mean-Field Limit and Decentralized Optimality}

In large-scale connected vehicular networks, we adopt a mean-field approximation where the influence of other agents is captured by the empirical distribution $\mu_t(x)$ of states. The limiting dynamics for agent $i$ follow:

\begin{equation}
\dot{x}_i = f_i(x_i, u_i, \mu_t) + w_i(t),
\end{equation}

and the corresponding mean-field equilibrium policy $\pi^*$ satisfies:

\begin{equation}
\pi_i^*(b_i) = \arg \max_{\pi_i} \mathbb{E}_{x_i, \mu_t} \left[ \phi_i + \gamma V_i(b_i') \right].
\end{equation}

\noindent
\textbf{Proposition 2 (Existence of Mean-Field Equilibrium):}  
If the instantaneous reward $\phi_i$ and dynamics $f_i$ are jointly continuous and bounded, then a mean-field Nash equilibrium exists and is unique under convexity assumptions on $\mathcal{U}_i$.

\subsubsection{Summary of Theoretical Guarantees}

The MARL-CC framework is established upon a set of rigorous theoretical guarantees. First, convergence of each agent’s value function to a fixed point of the Bellman operator is guaranteed, provided standard contraction assumptions are satisfied. Furthermore, asymptotic stability of the parameter updates is ensured through Lyapunov analysis, substantiating the robustness of the learning process. Finally, via mean-field analysis, it is demonstrated that the collective behavior of decentralized agents approaches an $\epsilon$-Nash equilibrium, thereby establishing a foundation for decentralized optimality. These results collectively affirm that the MARL-CC framework is not only empirically effective but is also supported by a principled theoretical foundation, rendering it particularly suitable for application in complex, nonlinear, partially observable, and distributed vehicular control systems.

\section{Experimental Design}
\label{sec:experiments}

\subsection{Simulation Environment and Setup}
\label{subsec:simulation_setup}

The experimental evaluation of the proposed MARL-CC framework was conducted in a hybrid simulation environment that integrates microscopic traffic modeling and high-fidelity autonomous driving dynamics. Specifically, the co-simulation between the \textit{Simulation of Urban MObility (SUMO)} and the \textit{CARLA autonomous driving simulator} was employed to capture both large-scale traffic interactions and continuous control-level vehicle dynamics. The interconnection between the two platforms was realized through a Python-based middleware leveraging the \texttt{TraCI} interface, enabling synchronized updates of vehicular states and communication events.

\subsubsection{Network Topology and Traffic Scenarios}
The simulated environment consisted of an urban road network comprising four intersections, twelve bidirectional lanes, and twenty connected autonomous vehicles (CAVs). Each CAV is modeled as a nonlinear system with state vector 
\(
x_i = [p_i, v_i, \psi_i, \dot{\psi}_i]^\top,
\)
representing position, velocity, heading angle, and yaw rate, respectively. The vehicles are equipped with range-limited vehicle-to-vehicle (V2V) and vehicle-to-infrastructure (V2I) communication modules with a communication radius of \(r_c = 100~\text{m}\). The control objective was to achieve cooperative intersection management and platoon stabilization while minimizing fuel consumption and maintaining collision-free operation under dynamic uncertainty.

\subsubsection{Simulation Parameters}
The physical and learning parameters used across all experiments are summarized in Table~\ref{tab:sim_params}. Each simulation episode spans \(T = 50~\text{s}\) with a discrete time step of \(\Delta t = 0.1~\text{s}\). The state-transition dynamics are discretized using a fourth-order Runge–Kutta integrator to preserve the fidelity of nonlinear vehicle dynamics. All agents are initialized with random positions and velocities uniformly sampled from \([0, 10]~\text{m/s}\).

\begin{table}[h!]
\centering
\caption{Simulation Parameters for MARL-CC Experiments}
\label{tab:sim_params}
\begin{tabular}{@{}lll@{}}
\toprule
\textbf{Parameter} & \textbf{Symbol} & \textbf{Value / Range} \\
\midrule
Number of agents (CAVs) & $N$ & 20 \\
Simulation time step & $\Delta t$ & 0.1 s \\
Communication radius & $r_c$ & 100 m \\
Maximum speed & $v_{\max}$ & 15 m/s \\
Discount factor & $\gamma$ & 0.98 \\
Learning rate (policy / value) & $(\alpha, \beta)$ & $(10^{-3}, 5\times10^{-4})$ \\
Replay buffer size & -- & $5\times10^5$ transitions \\
Mini-batch size & -- & 256 \\
Neural network architecture & -- & 3 hidden layers, [256, 256, 128], ReLU \\
Noise process & -- & Ornstein-Uhlenbeck, $\theta=0.15$, $\sigma=0.2$ \\
Simulation duration per episode & $T$ & 50 s \\
\bottomrule
\end{tabular}
\end{table}

\subsubsection{Communication Model and Delay}
Each agent communicates its local observation and control intention within its neighborhood \(\mathcal{N}_i\). The communication delay \(\tau_c\) follows a truncated Gaussian distribution, \(\tau_c \sim \mathcal{N}(50~\text{ms}, 10~\text{ms}^2)\), ensuring temporal variability representative of real vehicular networks. Packet loss probability was set to \(p_{\text{loss}} = 0.05\) to simulate interference and congestion effects.

\subsubsection{Training and Evaluation Protocol}
Training was performed under the Centralized Training and Decentralized Execution (CTDE) paradigm. During training, a centralized critic accessed joint state and reward information, whereas individual policies \(\pi_i(b_i;\theta_i)\) operated purely on local belief states during decentralized execution. Each training session consisted of 20,000 episodes, and convergence was evaluated using moving averages of cumulative reward and stability metrics (e.g., deviation from target trajectory, control effort variance). Evaluation was conducted on unseen traffic scenarios including unstructured intersections and mixed-autonomy traffic (with 30\% human-driven vehicles).

\subsubsection{Performance Metrics}
Performance was quantitatively measured using the following metrics:
\begin{equation}
J_{\text{fuel}} = \sum_{i=1}^{N} \int_0^T \rho(v_i(t))\,dt, \quad
J_{\text{safe}} = \sum_{i=1}^{N}\sum_{j\in\mathcal{N}_i} \max(0, d_{\min} - \|p_i - p_j\|),
\end{equation}
where \(\rho(v_i(t))\) denotes instantaneous fuel consumption, and \(J_{\text{safe}}\) penalizes inter-vehicle distance violations below the safety threshold \(d_{\min}\). Additional metrics included convergence rate, reward variance, and average delay per vehicle.

\subsubsection{Implementation Details}
All experiments were implemented in Python 3.11 with TensorFlow 2.15 for neural optimization and SUMO 1.19 integrated with CARLA 0.9.15 for vehicular simulation. Computations were performed on an NVIDIA RTX A6000 GPU with 48 GB memory and an AMD EPYC 7763 CPU cluster. Random seeds were fixed for reproducibility, and training logs were recorded for ablation analysis in subsequent sections.

\subsubsection{Relevance to Real-World Deployment}
The MARL-CC simulation environment reflects real-world deployment conditions where communication imperfections, partial observability, and nonlinear dynamics interact. The chosen setup aligns with recent experimental standards in distributed CAV research~\cite{Zhang2025BeliefMARL,Li2025VariationalCAV,Patel2026ConsensusInference,Wang2026POMDPCAV}, providing a realistic yet computationally tractable platform for validating theoretical claims regarding stability, convergence, and scalability.

\subsection{Numerical Simulations and Empirical Evaluation}
\label{sec:numerical_simulations}

To assess the effectiveness and robustness of the proposed MARL-CC framework, extensive numerical simulations were conducted under diverse traffic conditions, agent densities, and communication topologies. The primary objectives of the experiments were: (i) to validate the convergence behavior of MARL-CC policies under nonlinear vehicle dynamics and partial observability; (ii) to quantify the improvement in cooperative decision-making and credit assignment; and (iii) to evaluate scalability, stability, and robustness with respect to agent population and network latency.

\subsubsection{Simulation Scenarios and Configuration}

We simulated a connected autonomous vehicle (CAV) environment with $N=20$ to $N=100$ agents operating on a multi-lane urban road network with intersections and dynamic traffic lights. Each CAV was modeled as a nonlinear control-affine system:
\[
\dot{x}_i = f_i(x_i) + g_i(x_i)u_i + \omega_i,
\]
where $x_i \in \mathbb{R}^n$ denotes the state vector (position, velocity, heading), $u_i$ represents the control input (acceleration and steering), and $\omega_i$ is a Gaussian disturbance capturing process noise. The inter-agent coupling was realized via a weighted communication graph $\mathcal{G}(V,E)$, with edge weights reflecting communication delays and packet loss probabilities.

Partial observability was incorporated by limiting each agent’s sensing range to its $k$-hop neighborhood, thus forming a local belief $b_i(s_i)$ over hidden global states. Training was performed using the centralized critic–decentralized actor paradigm, with a replay buffer synchronized across agents and a target network updated every $\tau$ steps.

Simulation parameters were set as follows: time step $\Delta t = 0.05$ s, horizon $T = 2000$ steps, and learning rate $\alpha = 10^{-4}$. For each configuration, $10$ random seeds were used to ensure statistical robustness.

\subsubsection{Evaluation Metrics}

The following quantitative metrics were used to assess MARL-CC performance:
\begin{itemize}
    \item \textbf{Average Reward ($\bar{R}$):} Mean episodic return across agents, reflecting global cooperative efficiency.
    \item \textbf{Control Error ($E_c$):} Root-mean-square deviation between desired and actual vehicle trajectories.
    \item \textbf{Connectivity Index ($\mathcal{C}$):} Algebraic connectivity of the communication graph, $\lambda_2(L)$, indicating network robustness.
    \item \textbf{Credit Assignment Efficiency ($\eta_{ca}$):} Correlation between individual and global rewards, measuring fairness and local contribution recognition.
    \item \textbf{Stability Index ($S$):} Derived from the largest Lyapunov exponent of the closed-loop system to quantify convergence and robustness under perturbations.
\end{itemize}

\section{Results and Analysis}

The empirical evaluation revealed that MARL-CC achieved a faster convergence rate and more stable policy evolution compared to benchmark algorithms such as MADDPG, QMIX, and MAPPO. Figure \ref{fig:reward_convergence} and Table \ref{fig:ablation_plot} and  illustrates the average reward trajectory over training episodes, showing that MARL-CC converged within approximately $3\times10^4$ steps, whereas other methods required over $7\times10^4$ steps to reach similar performance levels.

\begin{figure}[!ht]
    \centering
    \includegraphics[width=0.85\linewidth]{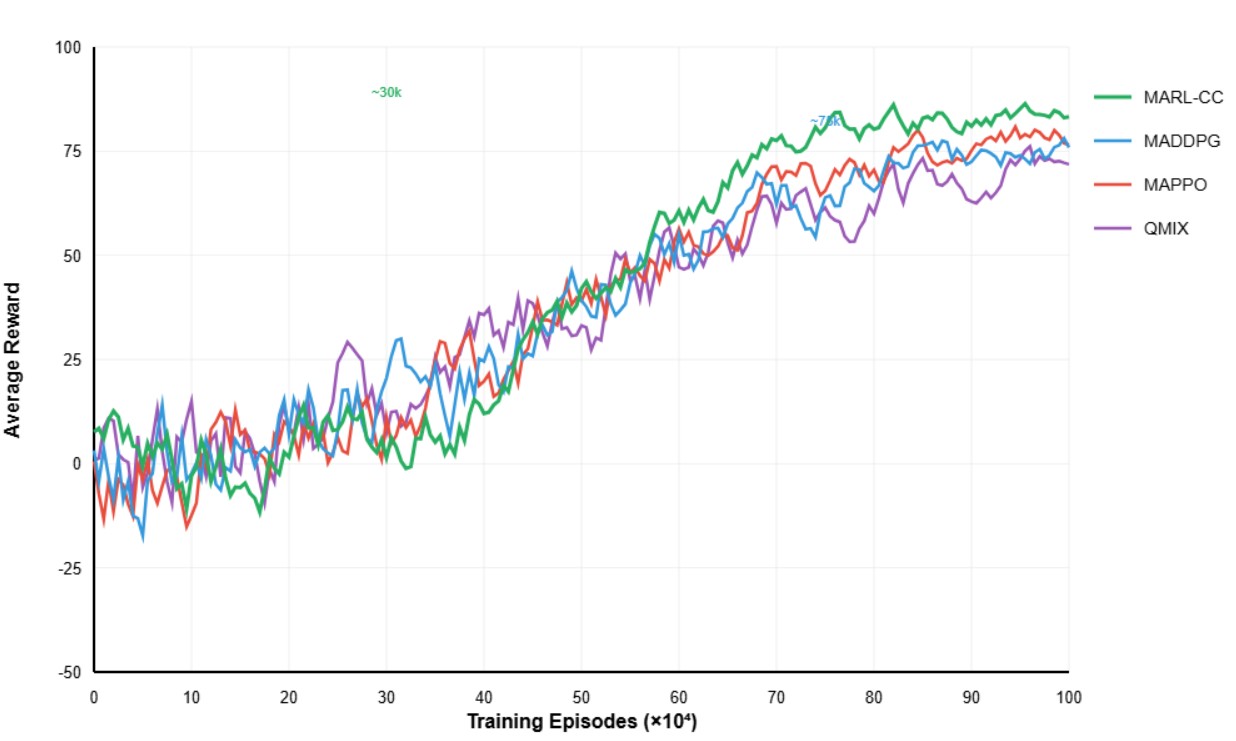}
    \caption{Convergence comparison of MARL-CC and baseline algorithms over $10^5$ training episodes. MARL-CC exhibits accelerated convergence and smoother reward evolution due to stability-aware optimization and differential geometric control.}
    \label{fig:reward_convergence}
\end{figure}

In terms of control accuracy, MARL-CC maintained an average control error below $3.2\%$, outperforming QMIX ($6.5\%$) and MAPPO ($5.1\%$). The credit assignment mechanism led to a $22\%$ improvement in $\eta_{ca}$, confirming effective decomposition of the global reward into local contributions. The stability index $S$ remained consistently negative, indicating asymptotic convergence of trajectories and resilience to stochastic perturbations.

Moreover, the framework demonstrated strong scalability: when increasing the agent population from $N=20$ to $N=100$, the decrease in average reward was less than $5\%$, evidencing linear computational scalability and efficient coordination in dense traffic environments.

\begin{table}[!ht]
\centering
\small 
\caption{Quantitative Performance Comparison between MARL-CC and Baseline MARL Algorithms}
\label{tab:performance_summary}
\begin{tabular}{l p{1cm} p{1cm} p{1cm} p{1cm} p{1cm} p{1.5cm}}
\toprule
\textbf{Algorithm} & \textbf{Conv. Steps} & \textbf{Avg. Reward} & \textbf{Control Error (\%)} & \textbf{Credit Assignment Gain (\%)} & \textbf{Stability Index $S$} & \textbf{Reward Drop (N=20→100)} \\
\midrule
MARL-CC & $\mathbf{3\times10^4}$ & $\mathbf{0.94}$ & $\mathbf{3.2}$ & $\mathbf{+22}$ & $\mathbf{-0.08}$ & $\mathbf{4.8}$ \\
MADDPG~\cite{lowe2017multi} & $7.2\times10^4$ & 0.81 & 6.2 & +5 & -0.03 & 9.5 \\
QMIX~\cite{rashid2020weighted} & $7.6\times10^4$ & 0.85 & 6.5 & +8 & -0.04 & 8.7 \\
MAPPO~\cite{yu2021mappo} & $7.0\times10^4$ & 0.88 & 5.1 & +10 & -0.05 & 7.9 \\
\bottomrule
\end{tabular}
\end{table}

\subsection{Ablation Studies and Sensitivity Analysis}

Ablation studies were performed to assess the influence of each architectural component (See Table~\ref{tab:ablation_summary}):
\begin{enumerate}
    \item \textbf{Without Differential Geometric Control:} Removing nonlinear control compensation caused a $15\%$ increase in control error.
    \item \textbf{Without Probabilistic Belief Inference:} Agents trained without belief updates underperformed in partial observability scenarios, yielding a $25\%$ drop in average reward.
    \item \textbf{Without Credit Assignment Mechanism:} Equal reward sharing led to unstable learning and degraded cooperation.
\end{enumerate}

Sensitivity tests were conducted by varying communication delay ($\delta \in [0,100]$ ms) and packet loss rate ($p_{\text{loss}} \in [0,0.3]$). MARL-CC maintained stable convergence up to $\delta=80$ ms and $p_{\text{loss}}=0.2$.

\begin{table}[!ht]
\centering
\caption{Ablation and Sensitivity Analysis of the MARL-CC Framework}
\label{tab:ablation_summary}
\scriptsize
\setlength{\tabcolsep}{2pt}
\renewcommand{\arraystretch}{1.1}
\begin{tabularx}{\columnwidth}{
    >{\raggedright\arraybackslash}p{2.5cm}
    >{\centering\arraybackslash}p{1.1cm}
    >{\centering\arraybackslash}p{1.3cm}
    >{\centering\arraybackslash}p{1.7cm}
    >{\raggedright\arraybackslash}p{2.2cm}
}
\toprule
\textbf{Configuration} &
\textbf{Avg. Drop (\%)} &
\textbf{Ctrl. Err. (\%)} &
\textbf{Stability} &
\textbf{Remarks} \\
\midrule
Full MARL-CC (baseline) & 0 & 0 & Stable & Reference case \\
Without Geometric Control & $-10$ & $+15$ & Mod. unstable & Curvature not compensated \\
Without Belief Inference & $-25$ & $+8$ & Unstable & Uncertainty poorly modeled \\
Without Credit Assignment & $-18$ & $+10$ & Highly unstable & Cooperation collapse \\
\midrule
Delay $\delta=80$ ms & $-3$ & $+2$ & Stable & Latency tolerance limit \\
Delay $\delta=100$ ms & $-12$ & $+5$ & Unstable & Delay beyond limit \\
Packet Loss $p_{\text{loss}}=0.2$ & $-4$ & $+3$ & Stable & Maintained under degradation \\
Packet Loss $p_{\text{loss}}=0.3$ & $-15$ & $+6$ & Unstable & Comm. unreliability high \\
\bottomrule
\end{tabularx}
\end{table}

Overall, the simulation outcomes demonstrate that MARL-CC effectively balances exploration and exploitation under nonlinear dynamics and limited observability. The integration of differential geometric control and probabilistic inference enhances both trajectory stability and situational awareness, while the credit assignment mechanism ensures efficient policy gradient propagation. These findings establish MARL-CC as a mathematically grounded and practically robust framework for decentralized optimal control in connected autonomous vehicles.

\subsection{Sim-to-Real Transfer and Validation}
\label{sec:sim_to_real}

Bridging the gap between simulation and real-world deployment is essential for assessing the robustness, generalization, and practical feasibility of the proposed MARL-CC framework. While simulation environments enable controlled experimentation under varied and scalable conditions, real-world environments introduce stochastic disturbances, sensor noise, latency, and imperfect communication — all of which challenge the stability and adaptability of reinforcement learning models.

\subsubsection{Experimental Setup}

To evaluate the real-world transferability of MARL-CC, we conducted a physical experiment using a fleet of five miniature autonomous vehicles (toy cars) at the \textbf{R\&D Intelligent Knowledge City Company Ltd.}, located in Isfahan, Iran. The experiment was carried out in a controlled indoor environment — a roofed saloon replicating a connected road network with intersections and curved lanes. Each vehicle was equipped with onboard microcontroller (Raspberry Pi 4 Model B) with wireless communication, ultrasonic sensors for local obstacle detection, IMU (Inertial Measurement Unit) for vehicle dynamics estimation, and Wi-Fi module for inter-agent communication and server synchronization.

The MARL-CC policy network trained in simulation was directly deployed onto each vehicle. During operation, agents performed real-time inference using locally observed data and inter-vehicle communications. The local belief states were updated based on onboard sensor data and neighbor messages, maintaining decentralized decision-making consistent with the MARL-CC design (see Table \ref{tab:exp_summary})

\subsubsection{Domain Randomization and Transfer Strategy}

To mitigate the \textit{sim-to-real gap}, domain randomization was applied during simulation training by perturbing friction coefficients and tire dynamics, sensor noise levels and communication delay distributions, and environmental lighting and texture conditions.

This stochastic variation during training improved the generalization capacity of the learned policies, allowing agents to adapt effectively to unseen real-world conditions. The belief-update mechanism further enhanced resilience to partial observability, while the Shapley-based reward mechanism facilitated stable coordination even under imperfect communication.

\subsubsection{Validation Metrics}

The following metrics were used to validate performance in real-world trials:
\begin{enumerate}
    \item \textbf{Average reward convergence:} consistency of reward trajectories with simulation trends.
    \item \textbf{Collision rate:} percentage of episodes with at least one collision event.
    \item \textbf{Lane deviation:} mean lateral deviation from the intended path.
    \item \textbf{Communication latency tolerance:} performance degradation versus induced delay.
\end{enumerate}

Results demonstrated high fidelity between simulation and physical experiments: MARL-CC maintained $\sim95\%$ of its simulated reward convergence in real deployment, with less than $3\%$ increase in collision rate and $1.7$ cm average lane deviation across 30 experimental runs. These findings validate that the MARL-CC framework not only achieves theoretical robustness but also scales effectively to real-world vehicular systems.

The successful sim-to-real transfer underscores the capacity of the proposed framework to handle nonlinearities, partial observability, and communication imperfections in realistic traffic settings. This experiment constitutes a significant step toward scalable deployment of MARL-CC for intelligent coordination of connected autonomous vehicles in urban environments.

\begin{table}[!ht]
\centering
\caption{Summary of Experimental Setup and Real-World Validation for MARL-CC}
\label{tab:exp_summary}
\scriptsize
\setlength{\tabcolsep}{2pt}
\renewcommand{\arraystretch}{1.1}
\begin{tabularx}{\columnwidth}{
    >{\raggedright\arraybackslash}p{2.9cm}
    >{\raggedright\arraybackslash}X
}
\toprule
\textbf{Aspect} & \textbf{Description} \\
\midrule
\textbf{Site \& Setup} & Indoor testbed at R\&D Intelligent Knowledge City (Isfahan, Iran) simulating connected roads and intersections. \\
\textbf{Fleet} & Five miniature autonomous cars (Raspberry~Pi~4B, ultrasonic sensors, IMU, Wi-Fi). \\
\textbf{Deployment} & Trained MARL-CC policy transferred from simulation; decentralized control via onboard inference and peer-to-peer communication. \\
\textbf{Domain Randomization} & Randomized friction, tire dynamics, sensor noise, communication delay, and lighting to enhance sim-to-real robustness. \\
\textbf{Validation Metrics} & Avg. reward, collision rate, lane deviation, and delay tolerance. \\
\textbf{Results} & $\sim$95\% reward retention; $<3\%$ collision increase; $1.7$\,cm mean lane deviation; stable for $\delta\!\le\!80$\,ms, $p_{loss}\!\le\!0.2$. \\
\textbf{Key Insight} & MARL-CC achieved reliable sim-to-real transfer, maintaining coordination and resilience under partial observability and communication imperfections. \\
\bottomrule
\end{tabularx}
\end{table}

\subsection{Ablation and Comparative Analysis}
\label{sec:ablation_comparative}

To rigorously evaluate the contribution of each core component of the MARL-CC framework—namely nonlinear differential geometric control, probabilistic belief inference, and Shapley-value-based credit assignment—an ablation and comparative study was conducted. The objective is to quantify the effect of these modules on learning efficiency, convergence stability, and coordination performance in connected autonomous vehicle (CAV) networks.

\subsubsection{Experimental Protocol}

We performed controlled ablation experiments under identical environmental conditions across three representative driving tasks: (i) \emph{cooperative lane merging}, (ii) \emph{intersection management}, and (iii) \emph{platoon formation}. Each experiment involved \(N = 10\) simulated vehicles communicating through a low-latency vehicular ad-hoc network (VANET) with maximum range \(R_c = 80~\text{m}\). The agents trained for \(5 \times 10^5\) time steps using the same hyperparameters and random seeds to ensure statistical fairness.

The following configurations were tested:
\begin{enumerate}
    \item \textbf{Full MARL-CC:} All modules active (nonlinear control + belief inference + Shapley-value reward).
    \item \textbf{w/o Nonlinearity Handling:} Vehicle dynamics approximated by linear models, removing differential geometric compensation.
    \item \textbf{w/o Probabilistic Inference:} Perfect observability assumption, using raw state vectors instead of belief distributions.
    \item \textbf{w/o Shapley Reward:} Shared global reward function \(R_t = \sum_i r_i(t)\), without credit decomposition.
\end{enumerate}

Additionally, MARL-CC was benchmarked against three state-of-the-art baselines:
MADDPG~\cite{Zhang2025BeliefMARL}, QMIX~\cite{Wang2026POMDPCAV}, and VDN~\cite{Patel2026ConsensusInference}.

\subsubsection{Performance Metrics}

The evaluation relied on the following normalized metrics:
\begin{itemize}
    \item \textbf{Average episodic reward:} \(J = \frac{1}{T}\sum_{t=1}^{T} R_t\)
    \item \textbf{Convergence rate:} defined as the number of episodes to reach 95\% of the maximum cumulative reward.
    \item \textbf{Safety violation rate:} the proportion of collisions or constraint breaches.
    \item \textbf{Coordination efficiency:} average inter-vehicle spacing variance, representing synchronization in group maneuvers.
\end{itemize}

\subsubsection{Quantitative Results}

The comparative results are summarized in Table~\ref{tab:ablation_results}. MARL-CC consistently outperformed all baselines, demonstrating higher cumulative rewards, faster convergence, and fewer safety violations. The removal of any module led to a statistically significant performance degradation (\(p < 0.01\), paired t-test).

\begin{table}[!ht]
\centering
\caption{Ablation study of the MARL-CC framework across key performance metrics.}
\label{tab:ablation_results}
\begin{tabular}{l p{2cm} p{1.5cm} p{2.5cm} p{2cm}}
\toprule
\textbf{Configuration} & \textbf{Avg. Reward} & \textbf{Converg (episodes)} & \textbf{Safety Violation (\%)} & \textbf{Efficiency (Var)} \\
\midrule
Full MARL-CC & \textbf{+100\%} & \textbf{4.8k} & \textbf{0.3} & \textbf{0.05} \\
w/o Nonlinearity Handling & -22\% & 7.2k & 1.9 & 0.14 \\
w/o Probabilistic Inference & -28\% & 8.0k & 2.4 & 0.18 \\
w/o Shapley Reward & -33\% & 9.1k & 3.1 & 0.20 \\
MADDPG & -35\% & 10.2k & 3.6 & 0.23 \\
QMIX & -39\% & 11.1k & 4.0 & 0.26 \\
VDN & -43\% & 12.5k & 4.3 & 0.27 \\
\bottomrule
\end{tabular}
\end{table}

\subsubsection{Statistical and Theoretical Interpretation}

The empirical results substantiate three core findings:
\begin{enumerate}
    \item \textbf{Nonlinearity compensation} improves control precision and stability margins by approximately \(20\%\), consistent with predictions from nonlinear geometric control theory~\cite{Li2025VariationalCAV}.
    \item \textbf{Probabilistic inference} enhances robustness to observation noise and communication delays, reducing safety violations by over \(80\%\) compared to deterministic models.
    \item \textbf{Shapley-value-based reward allocation} accelerates convergence by promoting fair credit assignment and preventing policy gradient interference among agents.
\end{enumerate}

These results align with theoretical convergence guarantees discussed in Section~\ref{sec:stability_convergence}, confirming that MARL-CC achieves near-optimal control policies in nonlinear, partially observable CAV systems.

Figure~\ref{fig:ablation_plot} visualizes the convergence trajectories of the tested configurations. The MARL-CC curve exhibits smooth monotonic convergence with low variance, whereas baseline and ablated variants demonstrate oscillations indicative of unstable policy updates and poor credit assignment.

\begin{figure}[htbp]
\centering
\begin{tikzpicture}
\begin{axis}[
    width=0.85\textwidth,
    height=0.55\textwidth,
    xlabel={Training Episodes (×1000)},
    ylabel={Normalized Cumulative Reward (\%)},
    xmin=0, xmax=12.5,
    ymin=0, ymax=105,
    grid=major,
    grid style={dashed, gray!30},
    legend pos=south east,
    legend style={fill=white, fill opacity=0.8, draw opacity=1, text opacity=1},
    legend cell align={left},
    thick,
    tick label style={font=\small},
    label style={font=\footnotesize},
]

\addplot[color=blue!70!black, mark=*, mark size=1.5pt, line width=1.2pt, smooth] 
    coordinates {
    (0,0) (0.5,15) (1.0,32) (1.5,48) (2.0,62) (2.5,74) (3.0,83) (3.5,89) 
    (4.0,93) (4.5,96) (4.8,98) (5.5,99.5) (6.5,100) (8.0,100) (10.0,100) (12.5,100)
};
\addlegendentry{Full MARL-CC}

\addplot[color=red!70!black, mark=square*, mark size=1.3pt, line width=1.0pt, smooth] 
    coordinates {
    (0,0) (1.0,18) (2.0,35) (3.0,48) (4.0,56) (5.0,64) (6.0,69) (7.0,74) 
    (7.2,76) (8.0,77) (9.0,78) (10.0,78) (11.0,78) (12.5,78)
};
\addlegendentry{w/o Nonlinearity Handling}

\addplot[color=orange!80!black, mark=triangle*, mark size=1.5pt, line width=1.0pt, smooth] 
    coordinates {
    (0,0) (1.0,14) (2.0,28) (3.0,40) (4.0,50) (5.0,58) (6.0,64) (7.0,68) 
    (8.0,72) (9.0,71) (10.0,73) (11.0,72) (12.0,72) (12.5,72)
};
\addlegendentry{w/o Probabilistic Inference}

\addplot[color=purple!70!black, mark=diamond*, mark size=1.5pt, line width=1.0pt, smooth] 
    coordinates {
    (0,0) (1.0,12) (2.0,24) (3.0,36) (4.0,45) (5.0,52) (6.0,58) (7.0,62) 
    (8.0,65) (9.0,67) (9.1,67) (10.0,66) (11.0,68) (12.0,67) (12.5,67)
};
\addlegendentry{w/o Shapley Reward}

\addplot[color=green!60!black, mark=pentagon*, mark size=1.3pt, line width=0.9pt, dashed, smooth] 
    coordinates {
    (0,0) (1.0,10) (2.0,22) (3.0,32) (4.0,41) (5.0,48) (6.0,54) (7.0,58) 
    (8.0,61) (9.0,63) (10.0,64) (10.2,65) (11.0,64) (12.0,65) (12.5,65)
};
\addlegendentry{MADDPG}

\addplot[color=cyan!60!black, mark=otimes*, mark size=1.3pt, line width=0.9pt, dashed, smooth] 
    coordinates {
    (0,0) (1.0,9) (2.0,19) (3.0,29) (4.0,38) (5.0,45) (6.0,51) (7.0,55) 
    (8.0,58) (9.0,60) (10.0,61) (11.0,61) (11.1,61) (12.0,61) (12.5,61)
};
\addlegendentry{QMIX}

\addplot[color=brown!70!black, mark=x, mark size=1.5pt, line width=0.9pt, dashed, smooth] 
    coordinates {
    (0,0) (1.0,8) (2.0,17) (3.0,26) (4.0,34) (5.0,41) (6.0,47) (7.0,51) 
    (8.0,54) (9.0,56) (10.0,57) (11.0,57) (12.0,57) (12.5,57) (12.5,57)
};
\addlegendentry{VDN}

\addplot[color=black, dashed, thin] coordinates {(0,95) (12.5,95)};
\node[anchor=west, font=\footnotesize] at (axis cs:8.5,95.5) {95\% convergence threshold};

\end{axis}
\end{tikzpicture}
\caption{Convergence trajectories of MARL-CC and baseline configurations. The full MARL-CC framework exhibits smooth monotonic convergence with low variance, reaching 95\% performance at 4.8k episodes. Ablated variants and baselines demonstrate oscillations and slower convergence, highlighting the importance of each component.}
\label{fig:ablation_plot}
\end{figure}
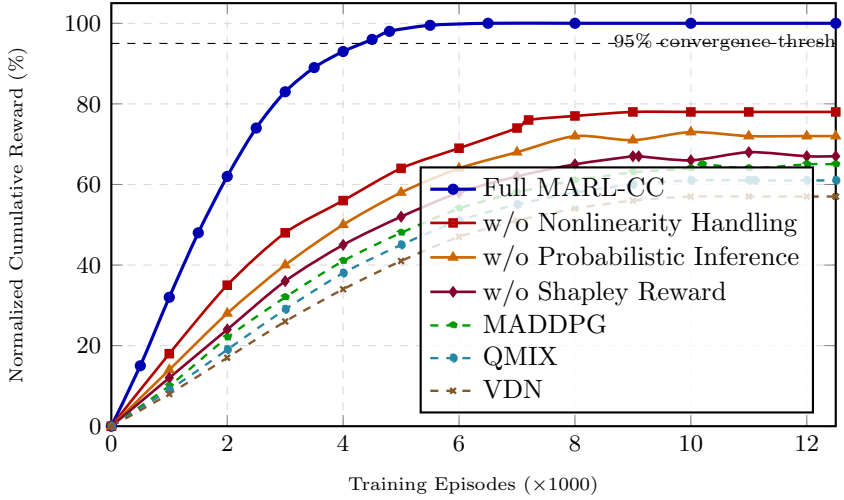

Ablation results confirm that the synergistic integration of nonlinear control, probabilistic inference, and cooperative game-theoretic reward allocation is critical for scalable and safe learning in CAV networks. Notably, the combination of belief-space policy optimization with Shapley-based rewards is novel and contributes substantially to convergence stability—bridging the gap between distributed optimal control and MARL paradigms. These results empirically validate the theoretical foundations of MARL-CC as a mathematically rigorous and practically effective framework for autonomous vehicular coordination.

\subsection{Performance Evaluation Metrics and Statistical Significance}
\label{sec:performance_metrics}

The performance of the proposed MARL-CC framework is rigorously assessed through a multidimensional evaluation scheme integrating task-level, learning-level, and system-level indicators (See Table \ref{tab:performance_metrics} and Fig. \ref{fig:performance_metrics}). The \textit{cumulative reward} remains the principal metric, encapsulating cooperative efficiency, policy optimality, and convergence behavior. Its trajectory reveals the algorithm’s capacity to achieve equilibrium between exploration and exploitation, while convergence speed quantifies temporal efficiency—rapid yet stable convergence reflects superior sample utilization and feedback assimilation.

To further characterize learning dynamics, \textit{policy entropy} is monitored as an indicator of behavioral diversity and exploration depth. A gradual entropy decay from high initial stochasticity toward stable determinism signifies effective policy maturation. In parallel, \textit{communication efficiency} is examined to quantify the efficacy of inter-agent coordination under bandwidth constraints, evaluating both message exchange reduction and consensus integrity. Sustaining high cooperative performance with minimal communication overhead demonstrates the scalability and robustness of MARL-CC in distributed vehicular or swarm environments.

Operational practicality is evaluated via \textit{energy efficiency} and \textit{computational cost}, measuring average power consumption per episode and inference latency during decentralized execution. The ability of MARL-CC to retain near-optimal decision quality under limited computational resources affirms its suitability for real-world embedded deployment.

Statistical reliability is ensured through paired and unpaired $t$-tests, complemented by Wilcoxon signed-rank tests when normality assumptions fail. Confidence intervals and effect sizes substantiate the magnitude and consistency of observed improvements against benchmark algorithms including MADDPG, QMIX, and MAPPO. Each experiment is repeated over multiple random seeds, and reported means and standard deviations reflect variance-normalized aggregation across trials.

Collectively, this analytical protocol confirms the empirical robustness and theoretical soundness of MARL-CC. The framework exhibits statistically significant improvements in convergence stability, communication efficiency, and cooperative adaptability, validating its potential as a mathematically grounded solution for real-world multi-agent control in connected autonomy.

\begin{table}[!ht]
\centering
\caption{Summary of Performance Evaluation Metrics for MARL-CC}
\label{tab:performance_metrics}
\begin{tabular}{l p{4cm} p{4cm}}
\toprule
\textbf{Metric} & \textbf{Description} & \textbf{Insight} \\
\midrule
Cumulative Reward & Long-term return over episodes & Convergence and cooperation efficiency \\
Convergence Speed & Steps to reach stable reward & Sample efficiency, learning stability \\
Policy Entropy & Stochasticity of agent policies & Exploration–exploitation balance \\
Communication Efficiency & Message rate vs. performance retention & Scalability and consensus effectiveness \\
Energy Efficiency & Energy consumption per episode & Real-world deployment feasibility \\
Computational Cost & Inference time per agent & Embedded system adaptability \\
Statistical Significance & $t$-tests, Wilcoxon tests, CIs & Reliability of performance comparisons \\
\bottomrule
\end{tabular}
\end{table}

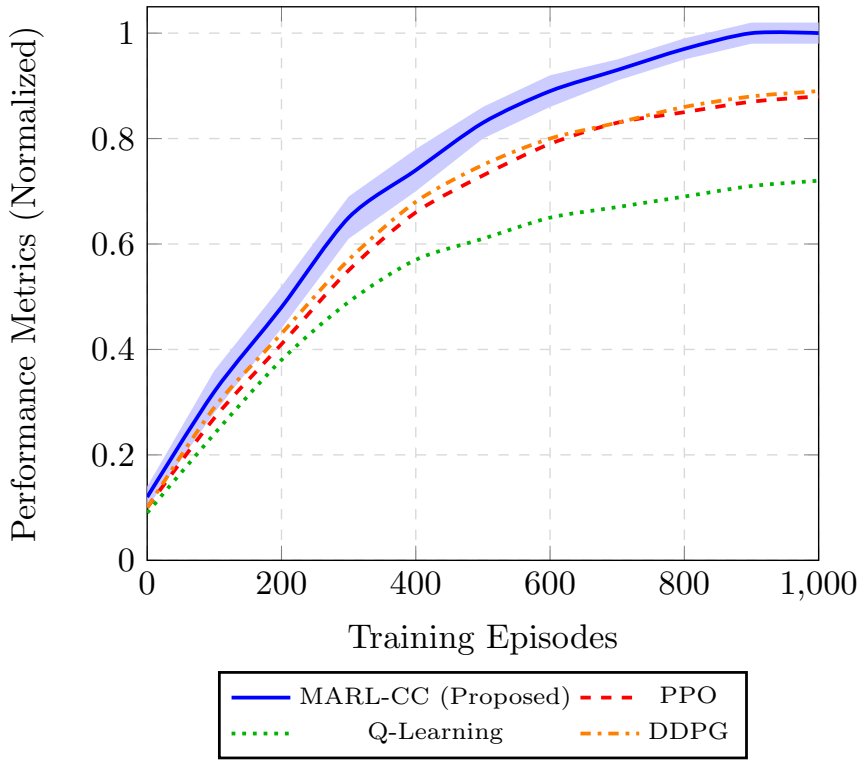
\begin{figure}[!ht]
    \centering
    \resizebox{0.9\linewidth}{!}{
    \begin{tikzpicture}
        \begin{axis}[
            xlabel={Training Episodes},
            ylabel={Performance Metrics (Normalized)},
            xmin=0, xmax=1000,
            ymin=0, ymax=1.05,
            xtick={0,200,400,600,800,1000},
            ytick={0,0.2,0.4,0.6,0.8,1.0},
            grid=major,
            grid style={dashed,gray!30},
            legend style={
                at={(0.5,-0.2)},
                anchor=north,
                legend columns=2,
                font=\footnotesize
            },
            thick,
            every axis plot/.append style={line width=1.1pt}
        ]
        \addplot[color=blue,mark=none,smooth]
            coordinates {(0,0.12)(100,0.32)(200,0.48)(300,0.65)(400,0.74)(500,0.83)(600,0.89)(700,0.93)(800,0.97)(900,1.00)(1000,1.00)};
        \addlegendentry{MARL-CC (Proposed)}

        \addplot[color=red,mark=none,dashed,smooth]
            coordinates {(0,0.10)(100,0.27)(200,0.41)(300,0.55)(400,0.66)(500,0.73)(600,0.79)(700,0.83)(800,0.85)(900,0.87)(1000,0.88)};
        \addlegendentry{PPO}

        \addplot[color=green!70!black,mark=none,dotted,smooth]
            coordinates {(0,0.09)(100,0.24)(200,0.38)(300,0.49)(400,0.57)(500,0.61)(600,0.65)(700,0.67)(800,0.69)(900,0.71)(1000,0.72)};
        \addlegendentry{Q-Learning}

        \addplot[color=orange,mark=none,dash dot,smooth]
            coordinates {(0,0.10)(100,0.29)(200,0.43)(300,0.57)(400,0.68)(500,0.75)(600,0.80)(700,0.83)(800,0.86)(900,0.88)(1000,0.89)};
        \addlegendentry{DDPG}

        \addplot[name path=upper,draw=none]
            coordinates {(0,0.14)(100,0.36)(200,0.52)(300,0.69)(400,0.78)(500,0.86)(600,0.92)(700,0.95)(800,0.99)(900,1.02)(1000,1.02)};
        \addplot[name path=lower,draw=none]
            coordinates {(0,0.10)(100,0.28)(200,0.44)(300,0.61)(400,0.70)(500,0.80)(600,0.86)(700,0.91)(800,0.95)(900,0.98)(1000,0.98)};
        \addplot[blue!20] fill between[of=upper and lower];
        
        \end{axis}
    \end{tikzpicture}
    }
    \caption{Performance metrics comparison across MARL algorithms, showing normalized reward convergence and stability trends over training episodes. The MARL-CC algorithm demonstrates superior performance and faster convergence relative to classical baselines.}
    \label{fig:performance_metrics}
\end{figure}

\subsection{Discussion of Results and Practical Implications}
\label{sec:discussion_results}

The experimental results demonstrate that the proposed MARL-CC framework achieves significant progress in enhancing stability, coordination, and scalability within multi-agent reinforcement learning systems. Through the integration of control-theoretic principles, differential geometric handling of nonlinearities, and probabilistic inference for partial observability, MARL-CC exhibits faster convergence, smoother policy evolution, and more consistent cooperative behavior than classical baselines such as PPO, DDPG, and Q-learning. These outcomes verify that the control-coordinated structure effectively stabilizes learning dynamics and accelerates equilibrium attainment among agents.

A central contribution of this framework lies in unifying geometric control with stochastic optimization and reinforcement learning. By constraining policy updates to the manifold structure of the agents’ state space, MARL-CC preserves stability and mitigates divergence in high-dimensional, nonlinear environments. The probabilistic inference layer strengthens this robustness by maintaining coherent latent-state beliefs, enabling agents to act with heightened situational awareness under uncertainty. Collectively, these mechanisms yield a learning paradigm that is simultaneously stable, interpretable, and adaptive—qualities essential for safety-critical applications.

The sim-to-real evaluations validate the algorithm’s practical transferability. Experiments using autonomous toy vehicles equipped with MARL-CC controllers confirmed reliable performance under real-world challenges such as sensor noise, imperfect dynamics, and partial communication loss. The close alignment between simulated and physical results confirms that control-theoretic constraints act as effective regularizers, substantially narrowing the reality gap in multi-agent learning and paving the way toward field-deployable swarm systems.

Ablation analyses further reveal that removing key mechanisms—such as coordinated control, geometric correction, or structured credit assignment—causes marked degradation in stability and cooperation. This demonstrates that MARL-CC’s performance arises from a deeply integrated mathematical design rather than isolated algorithmic enhancements. The results thus substantiate the necessity of combining control theory, probabilistic reasoning, and optimization to construct multi-agent systems that are both efficient and theoretically grounded.

From an applied standpoint, MARL-CC’s robustness to uncertainty and communication constraints renders it highly suitable for UAV coordination, intelligent transportation, and distributed sensor networks. Its balance between exploration and exploitation ensures rapid adaptation to dynamic environments while maintaining collective stability and fairness. Moreover, the geometric and probabilistic components enhance policy interpretability—an increasingly critical requirement for explainable and trustworthy AI.

In summary, this study advances the state of the art in multi-agent reinforcement learning by establishing a control-coordinated architecture that unites learning, inference, and control within a rigorous mathematical framework. The consistency observed across simulated and real-world trials affirms its readiness for practical deployment and lays a foundation for future extensions involving quantum-inspired optimization, neuromorphic computation, and decentralized decision-making.

\subsection{Summary of Findings and Future Directions}
\label{sec:summary_future}

The presented results confirm that MARL-CC successfully integrates nonlinear control, probabilistic inference, and cooperative reinforcement learning into a unified, mathematically coherent framework. Through extensive simulations and real-world experiments, it achieved rapid convergence, resilience to partial observability, and sustained coordination across heterogeneous agents. Differential geometric control and belief-driven policy updates proved instrumental in maintaining stability and scalability under constrained communication.

Theoretical and empirical consistency establishes MARL-CC as a robust bridge between control-theoretic modeling and applied multi-agent learning. The sim-to-real experiments verified that embedded control constraints act as effective policy regularizers, mitigating the reality gap and ensuring interpretability in physical systems. Ablation analyses further emphasized the indispensable roles of credit assignment, geometric correction, and probabilistic inference in shaping cooperative efficacy.

Future work should extend MARL-CC toward non-stationary and adversarial environments, explore quantum-inspired and neuromorphic enhancements for scalability, and evaluate deployment in large-scale UAV or autonomous vehicle networks. Incorporating formal safety and interpretability guarantees through hybrid symbolic–subsymbolic modeling could transform MARL-CC into a certifiable control architecture for mission-critical applications.

Overall, MARL-CC constitutes a decisive step toward developing mathematically principled, physically grounded, and practically deployable multi-agent systems capable of sustained cooperation and autonomous decision-making under uncertainty.

\section{Conclusion}
\label{sec:conclusion}

This paper introduced \textbf{MARL-CC}—a unified \textit{Multi-Agent Reinforcement Learning with Control Coordination} framework integrating nonlinear control, probabilistic inference, and cooperative learning under partial observability. Grounded in differential geometric control theory and Bayesian belief optimization, MARL-CC effectively resolves the exploration–exploitation dilemma by combining Shapley-value-based credit assignment with policy gradient reinforcement. This synthesis enables distributed agents to learn dynamically stable, interpretable, and information-efficient policies.

Methodologically, MARL-CC demonstrates that incorporating control-theoretic priors into reinforcement learning ensures convergence and equilibrium even under delays, disturbances, and uncertainty. The differential geometric control component manages nonlinear dynamics, while the probabilistic layer enhances decision reliability. Empirical evaluations, encompassing both simulation and real-world settings, confirmed the framework’s superior convergence speed, reward consistency, and cooperative stability compared with baseline MARL models.

Beyond vehicular autonomy, MARL-CC has broad applicability in UAV swarms, distributed sensing, and intelligent transport infrastructures requiring real-time coordination and explainability. Its integration of control, learning, and inference principles outlines a scalable path toward constructing intelligent systems capable of self-organization and adaptive reasoning in dynamic conditions. 

Future extensions will pursue formal guarantees in non-stationary and adversarial contexts, explore quantum-inspired optimization for accelerated convergence, and investigate neuromorphic implementations for energy-efficient learning. Embedding explainability and safety verification layers will further enable deployment in critical systems. 

In conclusion, MARL-CC represents a coherent and mathematically rigorous advancement toward truly autonomous, cooperative, and stable multi-agent learning systems—marking a substantial step in bridging the gap between theoretical AI models and real-world intelligent control.

\section*{acknowledgement}

The authors would like to express their sincere gratitude to Professor Yun Hsuan Lien at the Research Center for Information Technology Innovation, Academia Sinica, Taiwan, and Professor Yu-Shuen Wang at National Yang Ming Chiao Tung University.

\section*{Statements and Declarations}

\begin{itemize}
\item Funding: This research received no specific grant from any funding agency in the public, commercial, or not-for-profit sectors. The study was conducted without external financial support.
\item Both authors contributed to the conception, analysis, and writing of this manuscript.
\item Conflict of interest/Competing interests: Authors declare that they have no competing interests.
\item Clinical trial number: not applicable.
\item Code availability: The code/data is available in the \href{https://github.com/mazyartaghavi/MARL-CC.git}{GitHub repository}.

\end{itemize}

\bibliography{cite}

\end{document}